\documentclass{article}





     \usepackage[preprint,nonatbib]{neurips_2020}

\usepackage[utf8]{inputenc} 
\usepackage[T1]{fontenc}    
\usepackage{hyperref}
\usepackage{comment}
\usepackage{subcaption}
\usepackage{wrapfig}
\usepackage{bm}
\usepackage{url}            
\usepackage{booktabs}       
\usepackage{amsfonts}       
\usepackage{nicefrac}       
\usepackage{microtype}      
\usepackage{amssymb}
\usepackage{amsmath,amsthm}
\usepackage{cite}
\usepackage{psfrag}
\usepackage{cite}
\usepackage{dsfont}
\usepackage{enumitem}
\usepackage{ulem}

\newtheorem{definition}{Definition}
\newtheorem{remark}{Remark}
\newtheorem{theorem}{Theorem}

\newtheorem{lemma}{Lemma} 
\newtheorem{corollary}{Corollary}
\usepackage{graphicx}
\usepackage{xcolor}

\usepackage{algorithm}
\usepackage[noend]{algpseudocode}

\newcommand{\Cov}[0]{\mathsf{Cov}}
\newcommand{\Var}[0]{\mathsf{Var}}

\newcommand{\calC}[0]{\mathcal{C}}
\newcommand{\calF}[0]{\mathcal{F}}

\newcommand{\calS}[0]{\mathcal{S}}

\newcommand{\E}[0]{\mathbb{E}}

\newcommand{\R}[0]{\mathbb{R}}

\renewcommand{\tilde}{\widetilde}
\renewcommand{\hat}{\widehat}

\newcommand{\vertiii}[1]{{\left\vert\kern-0.25ex\left\vert\kern-0.25ex\left\vert #1 
    \right\vert\kern-0.25ex\right\vert\kern-0.25ex\right\vert}}
\newcommand\numberthis{\addtocounter{equation}{1}\tag{\theequation}}

\newcommand{\cD}{\mathcal{D}}

\newcommand{\cF}{\mathcal{F}}

\newcommand{\cK}{\mathcal{K}}

\newcommand{\cN}{\mathcal{N}}

\newcommand{\cP}{\mathcal{P}}
\newcommand{\cQ}{\mathcal{Q}}

\newcommand{\cT}{\mathcal{T}}
\newcommand{\cU}{\mathcal{U}}

\newcommand{\cX}{\mathcal{X}}

\newcommand{\EE}{\mathbb{E}}

\newcommand{\GG}{\mathbb{G}}

\newcommand{\NN}{\mathbb{N}}

\newcommand{\PP}{\mathbb{P}}

\newcommand{\RR}{\mathbb{R}}

\newcommand*{\dd}{\, \mathsf{d}}
\newcommand{\vasti}{\bBigg@{3.5 }}
\newcommand{\vast}{\bBigg@{4}}
\newcommand{\Vast}{\bBigg@{5}}
\newcommand{\Vastt}{\bBigg@{7}}
\makeatother
\newcommand{\be}{\begin{equation}}
\newcommand{\ee}{\end{equation}}
\newcommand{\ba}{\begin{align}}
\newcommand{\ea}{\end{align}}
\newcommand{\baa}{\begin{align*}}
\newcommand{\eaa}{\end{align*}}
\newcommand{\argmin}{\mathop{\mathrm{argmin}}}

\newcommand*{\gauss}{\varphi_\sigma}

\newcommand*{\Gauss}{\mathcal{N}_\sigma}

\newcommand{\wass}{\mathsf{W}_1}

\newcommand{\lip}[1]{\big\|#1\big\|_{\mathsf{Lip}_{1,0}}}
\newcommand{\ip}[1]{\big<#1\big>}

\newcommand{\infgwass}{\inf_{\theta\in\Theta}\mathsf{W}_1^{(\sigma)}(P,Q_\theta)}
\newcommand{\infgwassn}{\inf_{\theta\in\Theta}\mathsf{W}_1^{(\sigma)}(P_n,Q_\theta)}

\newcommand{\gwass}{\mathsf{W}_1^{(\sigma)}}

\newcommand{\fillater}[1] {{\color{red}[#1]}}

\DeclareMathOperator{\interior}{int}
\DeclareMathOperator{\diam}{\mathsf{diam}}


\title{Asymptotic Guarantees for Generative Modeling Based on the Smooth Wasserstein Distance}
%

\author{%
  Ziv Goldfeld\\
  Cornell University\\
  \texttt{goldfeld@cornell.edu} \\
  \And
  Kristjan Greenewald\\
  MIT-IBM Watson AI Lab\\
  \texttt{kristjan.h.greenewald@ibm.com}
  \And
  Kengo Kato\\
  Cornell University\\
  \texttt{kk976@cornell.edu} \\
}

\begin{document}

\maketitle

\begin{abstract}
Minimum distance estimation (MDE) gained recent attention as a formulation of (implicit) generative modeling. It considers minimizing, over model parameters, a statistical distance between the empirical data distribution and the model. This formulation lends itself well to theoretical analysis, but typical results are hindered by the curse of dimensionality. To overcome this and devise a scalable finite-sample statistical MDE theory, we adopt the framework of smooth 1-Wasserstein distance (SWD) $\gwass$. The SWD was recently shown to preserve the metric and topological structure of classic Wasserstein distances, while enjoying dimension-free empirical convergence rates. In this work, we conduct a thorough statistical study of the minimum smooth Wasserstein estimators (MSWEs), first proving the estimator's measurability and asymptotic consistency. We then characterize the limit distribution of the optimal model parameters and their associated minimal SWD. These results imply an $O(n^{-1/2})$ generalization bound for generative modeling based on MSWE, which holds in arbitrary dimension. Our main technical tool is a novel high-dimensional limit distribution result for empirical $\gwass$. The characterization of a nondegenerate limit stands in sharp contrast with the classic empirical 1-Wasserstein distance, for which a similar result is known only in the one-dimensional case. The validity of our theory is supported by empirical results, posing the SWD as a potent tool for learning and inference in high dimensions.

\end{abstract}


\section{Introduction}

Minimum distance estimation (MDE) considers the minimization of a statistical distance (SD) between the empirical data distribution and a parametric model class. 
Given an identically and independently distributed (i.i.d.) dataset $X_1,\ldots,X_n$ sampled from $P$, the goal is to learn a model $Q_\theta$, for $\theta\in\Theta$, that approximates the empirical measure $P_n:=n^{-1}\sum_{i=1}^{n} \delta_{X_{i}}$ under a SD\footnote{Recall that $\delta$ is an SD if $\delta(P,Q)=0 \iff P=Q$.} $\delta$, i.e., we aim to find $\hat{\theta}_{n}\in\argmin_{\theta\in\Theta}\delta(P_n,Q_\theta)$.
This classic mathematical statistics problem \cite{wolfowitz1957minimum,pollard1980minimum,parr1980minimum} was adopted in recent years as a formulation of generative modeling. Indeed, both generative adversarial networks (GANs) \cite{goodfellow2014generative,li2015generative,dziugaite2015training,nowozin2016f,arjovsky2017wasserstein,arora2017generalization,mroueh2017fisher,mroueh2018sobolev} and variational (or Wasserstein) autoencoders \cite{kingma2013auto,tolstikhin2018wasserstein} stem from different strategies for (approximately) solving MDE\footnote{or a variant thereof, where $Q_\theta$ is also estimated from samples.} for various choices of $\delta$.

Beyond the practical effectiveness of MDE-based generative models, this formulation is well-suited for a theoretic analysis. This inspired a recent line of works studying GAN generalization in terms of MDEs \cite{arora2017generalization,zhang2017discrimination,zhu2020deconstructing}. Such sample-complexity results boil down to the rate of empirical approximation under the chosen SD, i.e., the speed at which $\delta(P_n,P)$ converges to zero. Unfortunately, popular SDs, such as Wasserstein distances \cite{villani2008optimal}, $f$-divergences \cite{csiszar1967information}, and integral probability metrics \cite{muller1997integral} (excluding maximum mean discrepancy \cite{tolstikhin2016minimax}) suffer from the curse of dimensionality (CoD), converging as $\delta(P_n,P)\asymp n^{-1/d}$, with $d$ being the data dimension \cite{FournierGuillin2015,tsybakov2008introduction,nguyen2010estimating,sriperumbudur2009integral}.\footnote{One might hope that using more sophisticated estimates of $P$ (instead of the empirical measure) or~avoiding plugin methods altogether may alleviate the CoD. However, recent minimax analyses for Wasserstein distances \cite{singh2018minimax}, $f$-divergences \cite{krishnamurthy2014nonparametric} and integral probability metrics \cite{liang2019estimating} show that the $n^{-1/d}$ rate is generally unavoidable.} This limits the practical usefulness of the devised results, which degrade exponentially fast with dimension. 

\subsection{MDE with Smooth Wasserstein Distance and Contributions}

To circumvent the CoD impasse, we adopt the smooth 1-Wasserstein distance (SWD) \cite{Goldfeld2019convergence,Goldfeld2020GOT} as our SD. Namely, for any $\sigma>0$, consider $\gwass(P,Q):=\wass(P\ast\Gauss,Q\ast\Gauss)$,
where $\Gauss=\cN(0,\sigma^2\mathrm{I}_d)$ is the $d$-dimensional isotropic Gaussian measure of parameter $\sigma$, $P\ast\Gauss$ is the convolution of $P$ and $\Gauss$, and $\wass$ is the regular 1-Wasserstein distance (see Section \ref{sec: limit} for details). The motivation for this choice is twofold. First, the 1-Wasserstein distance is widely used for generative modeling  \cite{arjovsky2017wasserstein,gulrajani2017improved,tolstikhin2018wasserstein,adler2018banach} due to its beneficial attributes, such as metric structure, robustness to support mismatch, compatibility to gradient-based optimization, etc. As shown in \cite{Goldfeld2020GOT}, these properties are all preserved under Gaussian smoothing. Second, while $\wass$ suffers from the CoD, \cite{Goldfeld2019convergence} showed that $\EE\big[\gwass(P_n,P)\big]\lesssim_{\sigma,d} n^{-1/2}$ in all dimensions, whenever $P$ is sub-Gaussian.\footnote{The explicit bound from \cite{Goldfeld2020convergence} is $\EE\big[\delta^{(\sigma)}(P_n,P)\big]\lesssim \sigma^{-d/2} n^{-1/2}$. While the dependence on $n$ is optimal and decoupled from $d$ (unlike in CoD rates), the prefactor is exponential in $d$---a dependence that warrants further exploration. See discussion in Section \ref{SEC:summary}.} The considered minimum smooth Wasserstein estimator (MSWE) is thus
\begin{equation}
\hat{\theta}_n\in\argmin_{\theta\in\Theta}\gwass(P_n,Q_\theta).\label{EQ:MSWE}
\end{equation}

We first prove measurability and strong consistency of $\hat{\theta}_n$, along with almost sure convergence~of the associated minimal distance. Moving to a limit distribution analysis, we characterize the high-dimensional limits of $\sqrt{n}(\hat\theta_{n} - \theta^{\star})$ and $\sqrt{n}\inf_{\theta\in\Theta}\gwass(P_n,Q_\theta)$, thus establishing $n^{-1/2}$ convergence rates for both quantities in arbitrary dimension. 
Leveraging these results along with the framework from \cite{zhang2017discrimination}, we derive a high-dimensional generalization bound of order $n^{-1/2}$ on generative modeling with $\gwass$. Empirical results to support our theory are provided. Using synthetic data we validate both the limiting distributions of parameter estimates and the convergence of the SWD as the number of samples increases.

Our main technical tool is a novel high-dimensional limit distribution result for scaled empirical SWD, i.e., $\sqrt{n}\gwass(P_n,P)$, which may be of independent interest. Our analysis relies on the Kantorovich-Rubinstein (KR) duality for $\wass$ \cite{villani2008optimal}, which allows representing $\sqrt{n}\gwass(P_n,P)$ as a supremum of an empirical process indexed by the class of 1-Lipschitz functions convolved with a Gaussian density. We then prove that this function class is Donsker (i.e., satisfies the uniform central limit theorem (CLT)) under a polynomial moment condition on $P$.\footnote{The reader is referred to, e.g., \cite{LeTa1991,VaWe1996,GiNi2016} as useful references on modern empirical process theory.} By the continuous mapping theorem, we conclude that $\sqrt{n}\gwass (P_n,P)$ converges in distribution to the supremum of a tight Gaussian process. To enable evaluation of the distributional limit, we also prove that the nonparametric bootstrap is consistent. The characterization of a high-dimensional limit distribution for empirical SWD stands in sharp contrast to the classic $\wass$ case, for which such a result is known only when $d=1$ \cite{del1999central}. 

\subsection{Comparisons and Related Works}

MDE questions similar to those studied herein were addressed for classic $\wass$ in 
\cite{bassetti2006minimum,bernton2019parameter} (see also \cite{belili1999estimate,bassetti2006asymptotic}). They derived limit distribution results only for the one-dimensional case, essentially because it is unknown whether a properly scaled $\wass(P_{n},P)$ has a nondegenerate limit in general when $d>1$.



Sliced Wasserstein distance MDE was recently analyzed in \cite{nadjahi2019asymptotic}, covering arbitrary dimension, as done herein. Indeed, both sliced and smooth Wasserstein distances employ different approaches for alleviating the CoD. The sliced version eliminates dependence on $d$ by definition, as it is an average of one-dimensional $\wass$ distances (via random projections of $d$-dimensional distributions). SWD, on the other hand, does not entail dimensionality reduction, but leverages Gaussian smoothing to level out local irregularities in the high-dimensional distributions, which speeds up empirical convergence rates. This, in turn, enables a thorough MSWE asymptotic analysis for any $d$. Sliced and smooth Wasserstein distances are also similar in that they are both metrics and (topologically) equivalent to $\wass$, but there are some notable differences. While sliced $\wass$ is easily computable using the one-dimensional formula, computational aspects of SWD are still under exploration (see Section \ref{SEC:summary} for further discussion). SWD might be preferable as a proxy for regular $\wass$, as the two are within an additive $2\sigma\sqrt{d}$ gap from one another \cite[Lemma 1]{Goldfeld2020GOT}. Comparison results for sliced Wasserstein seem weaker, assuming compact support and involving implicit dimension-dependent constants (cf., e.g., \cite[Lemma 5.1.4]{bonnotte2013unidimensional}).

Also related to our work is entropic optimal transport (EOT). Its popularity has been driven both by algorithmic advances \cite{cuturi2013sinkhorn,altschuler2017near} (the latter gives a near-linear-time algorithm) and some statistical properties it possesses \cite{genevay2016stochastic,montavon2016wasserstein,rigollet2018entropic}. Specifically, two-sample empirical estimation under EOT is known to converge as $n^{-1/2}$ for smooth costs (thus, in particular, excluding entropic $\wass$) with compactly supported distributions \cite{genevay2019sample}, or squared cost with subgassian distributions \cite{mena2019statistical}. In comparison, SWD enjoys this fast convergence rate in the stronger one-sample setting and under milder conditions on the distribution. A CLT for empirical EOT under quadratic cost was also derived in \cite{mena2019statistical}. This result is similar to that of \cite{BarrioLoubes2019} for the classic 2-Wasserstein distance, but is markedly different from ours. Notably, \cite{mena2019statistical} derive the CLT with unknown centering constants given by the expected empirical EOT (which differs from the population one). Furthermore, unlike the SWD, EOT is not a metric, even when the underlying cost is \cite{feydy2018interpolating,bigot2019central}.\footnote{EOT can be transformed into a Sinkhorn divergence via a simple modification, but it is still is not a metric since it lacks the triangle inequality \cite{bigot2019central}.} In conclusion, while EOT can be efficiently computed, several gaps are still present as far as its statistical properties, and perhaps more importantly, it surrenders some desirable structural properties of classic Wasserstein distances.

\paragraph{Notation.} Let $\| \cdot \|$ denote the Euclidean norm, and $x \cdot y$, for $x,y \in \R^{d}$, designate the inner product. For any probability measure $Q$ on a measurable space $(S,\calS)$ and any measurable real function $f$ on $S$, we use the notation $Qf := \int_{S} f \dd Q$ whenever the integral exists. We write $a\lesssim_{x} b$ when $a\leq C_x b$ for a constant $C_x$ that depends only on $x$ ($a\lesssim b$ means $a\leq Cb$ for an absolute constant~$C$). 

We denote by $(\Omega,\mathcal{A},\PP)$ the underlying probability space on which all random variables are defined. The class of Borel probability measures on $\RR^d$ is $\cP(\RR^d)$. The subset of measures with finite first moment is denoted by $\cP_1(\RR^d)$, i.e., $P\in\cP_1(\RR^d)$ whenever $\int \|x\|\dd P(x)<\infty$. The convolution of $P,Q\in\cP(\RR^d)$ is $(P\ast Q)(\mathcal{A}) := \int\int\mathds{1}_{\mathcal{A}}(x+y)\dd P(x)\dd Q(y)$, where $\mathds{1}_\mathcal{A}$ is the indicator of $\mathcal{A}$. The convolution of measurable functions $f,g$ on $\R^{d}$ is $f \ast g (x) = \int f(x-y) g(y) \dd y$. We also recall that $\Gauss:=\cN(0,\sigma^2 \mathrm{I}_d)$, and use $\gauss(x) = (2\pi \sigma^{2})^{-d/2} e^{-\|x\|^{2}/(2\sigma^{2})}$, $x \in \R^{d}$, for the Gaussian density.

For a non-empty set $\cT$, let $\ell^\infty(\cT)$ denote the space of all bounded functions $f:\cT\to\RR$, equipped with the uniform norm $\|f\|_\cT:= \sup_{t\in \cT} \big|f(t)\big|$. We denote $\mathsf{Lip}_{1}(\R^{d}) := \{ f: \R^{d} \to \R : |f(x) - f(y) | \le \| x - y \| \ \forall x,y \in \R^{d} \}$ for the set of Lipschitz continuous functions on $\R^{d}$ with Lipschitz constant bounded by $1$. When $d$ is clear from the context we use the shorthand $\mathsf{Lip}_1$.


\section{Background and preliminaries}
\label{sec: limit}

We next provide a short background on the central technical ideas used in the paper.


\textbf{1-Wasserstein distance.} The $1$-Wasserstein distance $\wass(P,Q)$ between $P,Q \in \mathcal{P}_{1}(\R^{d})$ is
\[
\wass(P,Q) := \inf_{\pi \in \Pi(P,Q)} \int_{\R^d\times\R^d} \|x-y\|\dd\pi(x,y),
\]
where $\Pi(P,Q)$
is the set of all couplings of $P$ and $Q$. The KR duality further implies $\wass(P,Q) = \sup_{f \in \mathsf{Lip}_{1}} \int_{\R^d} f  \dd(P- Q)$. See \cite{villani2008optimal} for additional background.



\textbf{Empirical approximation.}
Fix $P \in \mathcal{P}_{1}(\R^{d})$ and let $X_{1},\dots,X_{n} \sim P$ be i.i.d. Let $P_{n} = n^{-1}\sum_{i=1}^{n} \delta_{X_{i}}$ be the empirical distribution of $X_{1},\dots,X_{n}$, where $\delta_x$ is the Dirac measure at $x$.
The convergence rate of $\wass(P_n,P)$ received much attention in the literature; see, e.g.,  \cite{dudley1969speed,bolley2007,boissard2011,dereich2013,boissard2014,FournierGuillin2015,weed2019,lei2020}.\footnote{Those references also contain results on the more general Wasserstein distance and non-Euclidean spaces.} Sharp rates are known in all dimensions;\footnote{Except $d=2$, where a log factor is possibly missing.} $\E[\wass(P_n,P)] = O(n^{-1/2})$ if $d=1$, $=O(n^{-1/2}\log n)$ if $d=2$, and $=O(n^{-1/d})$ for $d \ge 3$ provided that $P$ has sufficiently many moments (cf. \cite{FournierGuillin2015}). 

\textbf{Limit distribution.} Despite the comprehensive account of the expected $\wass(P_n,P)$, limiting distribution results for a scaled version thereof are known only for $d=1$. Indeed, Theorem 2 in \cite{GineZinn1986} yields that $\mathsf{Lip}_{1}(\R)$ is a $P$-Donsker class if (and only if) $\sum_{j} P\big([-j,j]^{c}\big)^{1/2}<\infty$.
Combining with KR duality, we have $
\sqrt{n} \wass(P_{n},P) \stackrel{d}{\to} \sup_{f \in \mathsf{Lip}_{1}(\R)} G_{P}(f)
$ for some tight Gaussian process $G_{P}$ in $\ell^{\infty}(\mathsf{Lip}_{1}(\R))$. An alternative derivation of the limit distribution for $d=1$ is given in \cite{del1999central}, based on the fact that $\wass$ equals the $L^1$ distance between distribution functions when $d=1$. The arguments in those papers, however, do not carry over to general $d$. For $d \ge 2$, in general, the function class $\mathsf{Lip}_{1}(\R^{d})$ is not Donsker; if it was, then $\EE[\wass (P_n,P)]$ would be of order $O(n^{-1/2})$, contradicting existing results lower bounding the rate of convergence of $\wass (P_n,P)$ \cite{FournierGuillin2015}.

\textbf{Smooth Wasserstein distance.} We are interested in $d \ge 2$, and instead of $\wass$ consider the SWD \cite{Goldfeld2019convergence,Goldfeld2020GOT} $\gwass (P,Q) := \wass(P\ast\Gauss,Q\ast\Gauss)$. 
\cite{Goldfeld2019convergence} shows that $\gwass(P_{n},P) = O_{P}(n^{-1/2})$, for all $d$ and any sub-Gaussian~$P$. Herein, we characterize the limit distribution of $\sqrt{n}\gwass(P_{n},P)$, prove that this distribution can be accurately estimated via the bootstrap, and derive concentration inequalities (see Supplement \ref{subsupp:concentation} for the latter). To simplify discussions, henceforth we assume~$0<\sigma\le1$. 


\textbf{Stochastic processes.}
A stochastic process $G:=\big(G(t)\big)_{t\in\cT}$ indexed by $\cT$ is Gaussian if the $\big(G(t_i)\big)_{i=1}^k$ are jointly Gaussian for any finite collection $\{t_i\}_{i=1}^{k} \subset \cT$. A Gaussian process $G$ is tight in $\ell^{\infty} (\cT)$ if and only if $\cT$ is totally bounded for the pseudometric $d_{G}(s,t) = \sqrt{\E\big[|G(s)-G(t)|^{2}\big]}$, and $G$ has sample paths a.s. uniformly $d_{G}$-continuous \cite[Section 1.5]{VaWe1996}. If~$G$ is sample bounded, we view it as a mapping from the sample space into $\ell^\infty(\cT)$. A version of a stochastic process is another stochastic process with the same finite dimensional distributions.


\section{Limit distribution theory for smooth Wasserstein distance}

The main technical tool for treating MSWE is a characterization of the limit distribution of $\sqrt{n}\gwass (P_{n},P)$ in all dimensions, which is the focus of this section. We also derive consistency of the bootstrap as a means for computing the limit distribution, and establish concentration inequalities for $\gwass (P_{n},P)$ (see Supplement \ref{subsupp:concentation} for the latter).

Starting from the limit distribution of $\sqrt{n}\gwass(P_n,P)$, some definitions are needed to describe the limit random variable. Denote $\mathsf{Lip}_{1,0}:=\{f\in \mathsf{Lip}_{1}: f(0)=0 \}$, assume that $P\| x \|^{2} < \infty$, and let $G_{P}^{(\sigma)} = \big(G_{P}^{(\sigma)}(f)\big)_{f \in \mathsf{Lip}_{1,0}}$ be a centered Gaussian process with covariance function $\E\big[G_{P}^{(\sigma)}(f)G_{P}^{(\sigma)}(g)\big] = \Cov_{P} (f\ast\gauss,g\ast\gauss)$, where $f,g, \in \mathsf{Lip}_{1,0}$. One may verify that $|f\ast\gauss (x)| \le \| x \| + \sigma \sqrt{d}$ (cf. Section \ref{SUBSEC:CLT for W1 proof}), so that $P|f\ast\gauss |^{2} < \infty$, for all $f \in \mathsf{Lip}_{1,0}$ (which ensures that the covariance function above is well-defined). With that, we are ready to state the theorem.  

\begin{theorem}[SWD limit distribution]
\label{thm: CLT for W1}
Assume that $P\| x \|^{2} < \infty$.  Let $\R^{d} = \bigcup_{j=1}^{\infty} I_{j}$ be a partition of $\R^{d}$ into bounded convex sets with nonempty interior such that $K:=\sup_{j}\diam(I_{j})<\infty$. If 
\begin{equation}
\label{eq: condition W1}
\sum\nolimits_{j=1}^{\infty} M_{j}P(I_{j})^{1/2} < \infty \quad  \text{with\ \ \ $M_{j}:=\sup_{I_{j}} \|x\|$}, 
\end{equation}
then there exists a version of $G_P^{(\sigma)}$ that is tight in $\ell^{\infty}(\mathsf{Lip}_{1,0})$, and denoting the tight version by the same symbol $G_{P}^{(\sigma)}$, we have $\sqrt{n}\gwass(P_{n},P)  \stackrel{d}{\to} \sup_{f \in \mathsf{Lip}_{1,0}} G_{P}^{(\sigma)}(f) =: L_P^{(\sigma)}$. In addition, we have $\sqrt{n}\E\left[\gwass (P_{n},P)\right]\lesssim_{d,K}  \sigma^{-\lfloor d/2 \rfloor}\sum_{j=1}^{\infty} M_{j}P(I_{j})^{1/2}$.
\end{theorem}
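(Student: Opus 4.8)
The plan is to pass, via Kantorovich--Rubinstein (KR) duality, to an empirical process indexed by smoothed $1$-Lipschitz functions, show that this class is $P$-Donsker under \eqref{eq: condition W1}, and then read off both the limit law and the moment bound. Since $P\|x\|^{2}<\infty$ gives $P\in\cP_1(\R^d)$, KR duality \cite{villani2008optimal} yields $\gwass(P_n,P)=\sup_{f\in\mathsf{Lip}_1}\int f\dd\big((P_n-P)\ast\Gauss\big)$. By Fubini and evenness of $\gauss$, $\int f\dd(Q\ast\Gauss)=Q(f\ast\gauss)$, and replacing $f$ by $f-f(0)$ changes nothing because $P_n-P$ annihilates constants. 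Hence, with $\mathbb G_n:=\sqrt n(P_n-P)$ and $\cF_\sigma:=\{f\ast\gauss:f\in\mathsf{Lip}_{1,0}\}$ (which satisfies $\cF_\sigma=-\cF_\sigma$), one obtains the exact identity $\sqrt n\,\gwass(P_n,P)=\sup_{g\in\cF_\sigma}\mathbb G_n(g)=\|\mathbb G_n\|_{\cF_\sigma}$, so that everything reduces to the empirical process over $\cF_\sigma$.

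The main work is showing $\cF_\sigma$ is $P$-Donsker. Each $g=f\ast\gauss$ is $C^\infty$ and $1$-Lipschitz (convolving with a probability density preserves the Lipschitz constant), with $|g(x)|\le\|x\|+\sigma\sqrt d$ as recalled before the theorem, and, moving all but one derivative onto the Gaussian, $\|\partial^\alpha g\|_\infty\le\|\partial^{\alpha-e_i}\gauss\|_{L^1}\lesssim_d\sigma^{-(|\alpha|-1)}$ for $|\alpha|\ge 1$. Fix the smoothness order $\alpha^\ast:=\lfloor d/2\rfloor+1>d/2$. On a cell $I_j$, after recentering ($g\mapsto g-g(x_j)$ for a fixed $x_j\in I_j$, which only shifts $\mathbb G_n(g\mathds 1_{I_j})$ by a scalar in an interval of half-length $M_j+\sigma\sqrt d$, thus contributing an $\sim M_j/\epsilon$-point net), the functions $g\mathds 1_{I_j}$ lie, modulo that scalar, in a $C^{\alpha^\ast}$-ball on the bounded convex set $I_j$ of radius $\lesssim_{d,K}\sigma^{-\lfloor d/2\rfloor}$ (using $\diam I_j\le K$). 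The classical metric-entropy estimate for smoothness classes (\cite[Thm.~2.7.1]{VaWe1996}) then gives a bracketing entropy of order $(\sigma^{-\lfloor d/2\rfloor}/\epsilon)^{d/\alpha^\ast}$ (plus a logarithmic term), whose exponent $d/\alpha^\ast<2$ renders the local bracketing integral finite. It remains to glue the cells, which is exactly the role of \eqref{eq: condition W1}: by the partitioning criterion for empirical processes over $\R^d$ (\cite{vanderVaart1996}; see also \cite[Sec.~2.7]{VaWe1996}), $P$-Donskerness follows once $\sum_j\|F\mathds 1_{I_j}\|_{L^2(P)}<\infty$, and since the envelope of $\cF_\sigma$ on $I_j$ is $\le M_j+\sigma\sqrt d$, we have $\|F\mathds 1_{I_j}\|_{L^2(P)}\lesssim M_jP(I_j)^{1/2}$ off finitely many cells, so \eqref{eq: condition W1} is precisely what is needed. (Measurability is not an issue: $\mathsf{Lip}_{1,0}$ is separable for uniform convergence on compacts and $f\mapsto f\ast\gauss$ is continuous for it, yielding a countable subclass dense in $L^2(P)$, so $\cF_\sigma$ is $P$-measurable.)

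With $\cF_\sigma$ Donsker, $\mathbb G_n\rightsquigarrow G_P$ in $\ell^\infty(\cF_\sigma)$ for a tight centered Gaussian $G_P$ with $\E[G_P(g_1)G_P(g_2)]=\Cov_P(g_1,g_2)$. As $H\mapsto\sup_{g\in\cF_\sigma}H(g)$ is $1$-Lipschitz on $\ell^\infty(\cF_\sigma)$, the continuous mapping theorem gives $\sqrt n\,\gwass(P_n,P)\stackrel{d}{\to}\sup_{g\in\cF_\sigma}G_P(g)$. Pulling back along $f\mapsto f\ast\gauss$ — which is injective (the Gaussian characteristic function never vanishes) and an isometry from $(\mathsf{Lip}_{1,0},d_{G_P^{(\sigma)}})$ into $(\cF_\sigma,d_{G_P})$, so that total boundedness and a.s.\ uniform sample-path continuity transfer — and setting $G_P^{(\sigma)}(f):=G_P(f\ast\gauss)$, we recover the tight version of $G_P^{(\sigma)}$ in $\ell^\infty(\mathsf{Lip}_{1,0})$ with the stated covariance, and $\sup_f G_P^{(\sigma)}(f)=\sup_g G_P(g)=:L_P^{(\sigma)}$; this is the first assertion.

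The moment bound falls out of the same estimates without passing to limits: from the identity above, $\sqrt n\,\E[\gwass(P_n,P)]=\E\|\mathbb G_n\|_{\cF_\sigma}\le\sum_j\E\big\|\mathbb G_n\big\|_{\cF_\sigma\mathds 1_{I_j}}$, and applying the bracketing maximal inequality (\cite[Lemma~3.4.2]{VaWe1996}) on each cell with envelope $\lesssim M_j\mathds 1_{I_j}$ together with the local entropy bound above controls the $j$-th term by $C_{d,K}\,\sigma^{-\lfloor d/2\rfloor}M_jP(I_j)^{1/2}$, the power $\sigma^{-\lfloor d/2\rfloor}$ emerging from the bracketing integral; summing over $j$ gives $\sqrt n\,\E[\gwass(P_n,P)]\lesssim_{d,K}\sigma^{-\lfloor d/2\rfloor}\sum_j M_jP(I_j)^{1/2}$. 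I expect the hard part to be the second paragraph: establishing the finite bracketing integral for the smoothed Lipschitz class and carrying out the cell-by-cell patching under the sharp tail condition \eqref{eq: condition W1}, all while tracking the exact power $\sigma^{-\lfloor d/2\rfloor}$ through the derivative estimates and the entropy integral.
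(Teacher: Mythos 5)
Your proposal is correct and follows essentially the same route as the paper's proof: KR duality reduces the problem to the class $\{f\ast\gauss: f\in\mathsf{Lip}_{1,0}\}$, per-cell derivative bounds of order $\sigma^{-\lfloor d/2\rfloor}$ place the restrictions in H\"older balls of smoothness $\lfloor d/2\rfloor+1$ whose metric entropy (Theorem 2.7.1 in \cite{VaWe1996}) yields a convergent entropy integral, the partitioning Donsker theorem of \cite{vanderVaart1996} glues the cells under \eqref{eq: condition W1}, and the continuous mapping theorem plus per-cell maximal inequalities give the limit law and the moment bound. The only deviations (recentering within each cell and using bracketing rather than uniform-entropy maximal inequalities) are cosmetic and do not change the argument.
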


The proof is given in Supplement \ref{SUBSEC:CLT for W1 proof}. We use KR duality to translate the Gaussian convolution in the measure space to the convolution of Lipschitz functions with a Gaussian density. It is then shown that this class of Gaussian-smoothed Lipschitz functions is $P$-Donsker by bounding the metric entropy of the function class restricted to each $I_{j}$. The proof substantially relies on empirical process theory.

\begin{remark}[Discussion on Condition \eqref{eq: condition W1}]
Let $\{ I_{j} \}$ consist of cubes with side length $1$ and integral lattice points as vertices. One may then obtain the bound
\[
\sum_{j=1}^{\infty} M_{j} P(I_{j})^{1/2} \lesssim_{d} \sum_{k=1}^{\infty} k^{d} P\big( \| x \|_{\infty} > k \big)^{1/2}  \lesssim \int_{1}^{\infty} t^{d} P\big(\| x \|_{\infty} > t \big)^{1/2}\dd t,
\]
which is finite (by Markov's inequality) if there exists $\epsilon>0$ such that $P|x_{j}|^{2(d+1)+\epsilon} < \infty$ for all $j$. 

Proposition 1 in \cite{Goldfeld2019convergence} shows that $\E\big[\gwass (P_{n},P)\big] =  O(n^{-1/2})$ whenever $P$ is sub-Gaussian. Theorem \ref{thm: CLT for W1} substantially relaxes this moment condition, in addition to deriving a limit distribution. 
\end{remark}


\begin{remark}[Limit distribution for empirical ${\mathsf{W}_p}$]
The limit distribution of $\sqrt{n} \mathsf{W}_{p}(P_n,P)$, when $P$ is supported on a finite or a countable set, was derived in \cite{Sommerfeld2018} and \cite{Tameling2019}, respectively.
\cite{del2019central} show asymptotic normality of $\sqrt{n}(\mathsf{W}_{2}(P_n,Q) - \E[\mathsf{W}_{2}(P_n,Q)])$, in arbitrary dimension, but under the assumption that $Q \ne P$. The limit distribution for the empirical $2$-Wasserstein distance with $Q=P$ is known only when $d=1$ \cite{delbarrio2005}. 
None of the techniques employed in these works are applicable in our case, which therefore requires a different analysis as described above.
\end{remark}

The proof of Theorem \ref{thm: CLT for W1} along with Lemma \ref{lem: limit variable} from Supplement \ref{SUPP: bootstrap proof} implies that the distribution of $L_P^{(\sigma)}$ can be estimated via the bootstrap \cite[Chapter 3.6]{VaWe1996}. Let $X_{1}^{B},\dots,X_{n}^{B}$ be i.i.d. from $P_n$ conditioned on $X_1,\ldots,X_n$, and set $P_{n}^{B}:= n^{-1}\sum_{i=1}^{n}\delta_{X_{i}^{B}}$ 
as the bootstrap empirical distribution. 
Let $\PP^{B}$ be the probability measure induced by the bootstrap (i.e., 
the conditional probability given $X_1,X_2,\dots$).

\begin{corollary}[Bootstrap consistency]
\label{cor: bootstrap}
Assume the conditions of Theorem \ref{thm: CLT for W1} and that $P$ is not a point mass. Then, we have $\sup_{t \ge 0}  \big|\PP^{B}\big(\sqrt{n}\gwass (P_{n}^{B},P_{n})  \le t \big)- \PP\big( L_{P}^{(\sigma)} \le t\big) \big| \to 0$ a.s.
\end{corollary}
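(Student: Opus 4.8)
The plan is to deduce bootstrap consistency from the $P$-Donsker property of the Gaussian-smoothed Lipschitz class established in Theorem~\ref{thm: CLT for W1}, using the standard theory of the empirical bootstrap for Donsker classes \cite[Chapter 3.6]{VaWe1996}. Concretely, set $\mathcal{F}_\sigma := \{ f\ast\gauss : f\in\mathsf{Lip}_{1,0}\}$, viewed as a subset of $\ell^\infty(\mathsf{Lip}_{1,0})$ via the identification $f\mapsto f\ast\gauss$. By KR duality, $\sqrt{n}\gwass(P_n,P) = \|\GG_n\|_{\mathsf{Lip}_{1,0}}$, where $\GG_n := \sqrt{n}(P_n-P)$ is the empirical process indexed by $\mathcal{F}_\sigma$; similarly $\sqrt{n}\gwass(P_n^B,P_n) = \|\GG_n^B\|_{\mathsf{Lip}_{1,0}}$ with $\GG_n^B := \sqrt{n}(P_n^B - P_n)$ the bootstrap empirical process. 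Since the proof of Theorem~\ref{thm: CLT for W1} shows $\mathcal{F}_\sigma$ is $P$-Donsker with an envelope in $L^2(P)$ (indeed $|f\ast\gauss(x)|\le \|x\|+\sigma\sqrt d$ and $P\|x\|^2<\infty$), Theorem~3.6.1 in \cite{VaWe1996} gives that $\GG_n^B$ converges conditionally in distribution, in probability (in fact outer almost surely, by Theorem~3.6.2 there), to the same tight Gaussian limit $G_P^{(\sigma)}$ as $\GG_n$, in $\ell^\infty(\mathsf{Lip}_{1,0})$.

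Next I would push this weak convergence through the sup functional and then through the distribution function. The map $z\mapsto \|z\|_{\mathsf{Lip}_{1,0}}$ is continuous on $\ell^\infty(\mathsf{Lip}_{1,0})$, so by the continuous mapping theorem for the bootstrap (the conditional-weak-convergence analogue, again in \cite[Chapter 3.6]{VaWe1996}), $\|\GG_n^B\|_{\mathsf{Lip}_{1,0}} = \sqrt{n}\gwass(P_n^B,P_n)$ converges conditionally in distribution, outer a.s., to $L_P^{(\sigma)} = \sup_{f\in\mathsf{Lip}_{1,0}} G_P^{(\sigma)}(f)$. To upgrade convergence in distribution of the conditional laws to the uniform (Kolmogorov-distance) statement $\sup_{t\ge 0}|\PP^B(\sqrt n\gwass(P_n^B,P_n)\le t) - \PP(L_P^{(\sigma)}\le t)|\to 0$ a.s., it suffices that the limit $L_P^{(\sigma)}$ have a continuous distribution function on $[0,\infty)$ — then Pólya's theorem (uniform convergence of CDFs to a continuous limit CDF) applies along the a.s. event where conditional weak convergence holds. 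This is exactly where the hypothesis that $P$ is not a point mass enters, and it is supplied by Lemma~\ref{lem: limit variable} of Supplement~\ref{SUPP: bootstrap proof}, which (as the text states) guarantees the law of $L_P^{(\sigma)}$ is continuous; I would invoke it here.

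The main obstacle, and the step requiring the most care, is the passage from ``conditional weak convergence in probability'' (the default conclusion of generic bootstrap CLT statements) to the \emph{almost sure} conclusion claimed in the corollary, together with handling the measurability subtleties inherent in bootstrap-in-probability statements for general (possibly nonseparable) $\ell^\infty$-valued processes. The resolution is to use the outer-almost-sure version of the bootstrap functional CLT for Donsker classes \cite[Theorems 3.6.1--3.6.3]{VaWe1996}, which does deliver an a.s. statement provided the class is Donsker and the envelope is square-integrable; one then restricts to the corresponding $\PP$-probability-one event (on which also $L_P^{(\sigma)}$ has continuous law, by Lemma~\ref{lem: limit variable}) and applies Pólya's theorem pointwise on that event. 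A secondary technical point is verifying the envelope/measurability conditions needed to invoke these theorems, but these follow directly from the bounds and Donsker property already established in the proof of Theorem~\ref{thm: CLT for W1}, so no new estimates are needed.
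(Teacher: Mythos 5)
Your proposal is correct and follows essentially the same route as the paper: apply the outer-almost-sure bootstrap CLT for Donsker classes (Theorem 3.6.2 in van der Vaart--Wellner) to the class $\check{\calF}=\{f\ast\gauss: f\in\mathsf{Lip}_{1,0}\}$, push through the sup via the continuous mapping theorem, and then combine continuity of the distribution function of $L_P^{(\sigma)}$ (Lemma \ref{lem: limit variable}, where the non--point-mass assumption enters) with P\'olya's theorem to get the uniform a.s. statement.
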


This corollary, together with continuity of the distribution function of $L_{P}^{(\sigma)}$ (cf. Lemma \ref{lem: limit variable} in the Appendix),  implies that for $\hat{q}_{1-\alpha} := \inf \{ t \ge 0 : \PP^{B} \big(\sqrt{n}\gwass (P_{n}^{B},P_{n}) \le t\big) \ge 1-\alpha \}$ (which can be computed numerically), we have $\PP\big( \sqrt{n}\gwass (P_{n},P) > \hat{q}_{1-\alpha}\big) = \alpha+o(1)$. 

\begin{remark}[Two-sample setting] Theorem \ref{thm: CLT for W1} and Corollary \ref{cor: bootstrap} can be extended to the two-sample case, i.e., accounting for $\gwass(P_n,Q_m)$. The proof of Theorem \ref{thm: CLT for W1} shows that the function class $\cF_{\sigma,d} := \left\{ f\ast\gauss : f \in \mathsf{Lip}_{1,0} \right\}$ is Donsker, for all $d$. Consequently, $\sqrt{\frac{mn}{m+n}}\gwass(P_n,Q_m)$ converges in distribution to the supremum of a tight Gaussian process, if the population distributions agree (cf. \cite[Chapter 3.7]{vanderVaart1996}). By \cite[Theorem 3.7.6]{vanderVaart1996}, this limit process can be consistently estimated by the two-sample bootstrap. Two-sample testing with (unsmoothed) $\mathsf{W}_p$ is studied in \cite{ramdas2017}, but they find critical values for the tests only for $d=1$. This is due to lack of tractable distribution approximation results for $\mathsf{W}_p$ in high dimensions. Our theory shows that we can overcome this bottleneck by adopting $\gwass$.
\end{remark}


\section{Minimum Smooth Wasserstein Estimation}
\label{sec: generative}

We study the statistical properties of the MSWE $\hat\theta_n\in\argmin_{\theta\in\Theta}\gwass(P_n,Q_\theta)$ in high dimensions. Here $P\in\cP_1(\RR^d)$, $P_n$ is the associated empirical measure, and $Q_\theta\in\cP_1(\RR^d)$, where $\theta\in\Theta\subset  \RR^{d_0}$, is the model class. We henceforth assume (without further mentioning) that the parameter space $\Theta \subset \R^{d_{0}}$ is compact with nonempty interior. The boundedness assumption on $\Theta$ can be weakened with some adjustments to the proofs of Theorems \ref{TM:MSWD_measurable} and \ref{TM:MSWD_inf_argmin} below; cf. \cite[Assumption 2.3]{bernton2019parameter}.

\subsection{Measurability and Consistency}



The following theorem states that the MSWE is measurable. The proof (given in Supplement \ref{SUBSEC:MSWD_measurable_proof}) relies on Corollary 1 in \cite{brown1973measurable}, which provides a sufficient condition for the desired measurability.

\begin{theorem}[MSWE measurability]
\label{TM:MSWD_measurable}
Assume that the map $\theta \mapsto Q_{\theta}$ is continuous relative to the weak topology,\footnote{The \textit{weak topology} on $\cP(\RR^d)$ is induced by integration against the set $C_b(\RR^d)$ of bounded and continuous functions, i.e., $(\mu_k)_{k\in\NN}$ converges weakly to $\mu$, denoted by $\mu_k\rightharpoonup \mu$, if $\mu_{k}(f) \to \mu (f)$, for all $f\in C_b(\RR^d)$.} i.e., $Q_{\theta}\rightharpoonup Q_{\overline{\theta}}$ whenever $\theta \to \overline{\theta}$ in $\Theta$. Then, for every $n\in\NN$, there exists a measurable function $\omega \mapsto \hat{\theta}_{n} (\omega)$ such that $\hat{\theta}_{n}(\omega)\in\argmin_{\theta\in\Theta}\gwass\big(P_n(\omega),Q_\theta\big)$ for every $\omega \in \Omega$ (this also implies that $\argmin_{\theta\in\Theta}\gwass\big(P_n(\omega),Q_\theta\big)$ is nonempty).  
\end{theorem}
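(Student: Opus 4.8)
The plan is to invoke Corollary 1 of \cite{brown1973measurable}, which gives a measurable selector for the argmin of a function $(\omega,\theta)\mapsto h(\omega,\theta)$ provided that: (i) for each fixed $\omega$, the map $\theta\mapsto h(\omega,\theta)$ is lower semicontinuous on the compact set $\Theta$ (so the argmin is nonempty); and (ii) for each fixed $\theta$, the map $\omega\mapsto h(\omega,\theta)$ is measurable. Here we take $h(\omega,\theta) := \gwass\big(P_n(\omega),Q_\theta\big)$. So the proof reduces to verifying these two continuity/measurability properties.

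First I would establish joint continuity of $\gwass$ in its two arguments with respect to the weak topology on $\cP_1(\RR^d)$ — or at least the pieces needed. The key facts are: (a) $\gwass(\mu,\nu)=\wass(\mu\ast\Gauss,\nu\ast\Gauss)$, and Gaussian convolution is continuous from $(\cP(\RR^d),\text{weak})$ to itself; (b) on the Gaussian-smoothed measures, $\wass$ metrizes weak convergence together with convergence of first moments (standard fact, e.g. \cite[Chapter 6]{villani2008optimal}), and since we always have a fixed Gaussian component the first-moment control is automatic along weakly convergent sequences of the $P_n(\omega)$'s (which are finitely supported) and along $\theta\to\bar\theta$ by the hypothesis $Q_\theta\rightharpoonup Q_{\bar\theta}$ — though here one should be slightly careful and either assume/deduce uniform integrability or note that the triangle inequality plus $\wass\le \gwass + 2\sigma\sqrt d$ style bounds suffice. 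For (ii): fix $\theta$; the map $\omega\mapsto P_n(\omega)=n^{-1}\sum_i\delta_{X_i(\omega)}$ is measurable into $\cP_1(\RR^d)$ (each $X_i$ is a random vector), and composing with the continuous map $\mu\mapsto\gwass(\mu,Q_\theta)$ yields measurability of $\omega\mapsto h(\omega,\theta)$. For (i): fix $\omega$; since $P_n(\omega)$ is fixed, $\theta\mapsto\gwass\big(P_n(\omega),Q_\theta\big)$ is the composition of $\theta\mapsto Q_\theta$ (weakly continuous by hypothesis), $\nu\mapsto\nu\ast\Gauss$ (weakly continuous), and $\nu\mapsto\wass(P_n(\omega)\ast\Gauss,\nu)$, which is continuous (indeed $1$-Lipschitz) in $\wass$ hence lower semicontinuous under weak convergence. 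Composition gives lower semicontinuity (in fact continuity) of $\theta\mapsto h(\omega,\theta)$, and since $\Theta$ is compact the infimum is attained, so $\argmin_{\theta\in\Theta}\gwass\big(P_n(\omega),Q_\theta\big)\neq\emptyset$.

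With (i) and (ii) in hand, Corollary 1 of \cite{brown1973measurable} produces a universally measurable — and after the usual completion/modification, $\mathcal{A}$-measurable — function $\omega\mapsto\hat\theta_n(\omega)$ with $\hat\theta_n(\omega)\in\argmin_{\theta\in\Theta}\gwass\big(P_n(\omega),Q_\theta\big)$ for every $\omega$, which is exactly the claim. The main obstacle I anticipate is the bookkeeping around the topology on the space of measures: one must make sure that $\gwass$ restricted to the relevant subset of $\cP_1(\RR^d)$ is genuinely continuous (not merely lower semicontinuous) where needed, and that the measurability of $\omega\mapsto P_n(\omega)$ is phrased with respect to the correct Borel $\sigma$-algebra on $\cP_1(\RR^d)$ (the one generated by the weak topology, which coincides with the one generated by evaluation maps $\mu\mapsto\mu(B)$). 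Since the Gaussian convolution uniformly controls tails, these are technical rather than substantive points, and the continuity of $\wass$ under weak-plus-first-moment convergence does the real work.
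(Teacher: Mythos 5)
Your overall route is the paper's: nonemptiness of the argmin via lower semicontinuity of $\theta\mapsto\gwass\big(P_n(\omega),Q_\theta\big)$ plus compactness of $\Theta$ (Weierstrass), and a measurable selector via Corollary 1 of \cite{brown1973measurable}. However, there is a genuine gap in how you feed the hypotheses to that corollary. Brown--Purves requires \emph{joint} Borel measurability of the objective $(x,\theta)\mapsto\gwass\big(P_{n}(x),Q_{\theta}\big)$ (together with Polishness, $\sigma$-compact sections, and lower semicontinuity in $\theta$); it is not enough to check, as you propose, measurability in $\omega$ for each fixed $\theta$ and lower semicontinuity in $\theta$ for each fixed $\omega$ --- separate measurability does not yield joint measurability. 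The only bridge you offer is the parenthetical claim that $\theta\mapsto\gwass\big(P_n(\omega),Q_\theta\big)$ is ``in fact continuous'' (a Carath\'eodory function would indeed be jointly measurable), but that claim is unjustified under the theorem's hypothesis: weak continuity of $\theta\mapsto Q_\theta$ only gives $Q_{\theta_k}\ast\Gauss\rightharpoonup Q_{\overline\theta}\ast\Gauss$, and $\wass$ is merely lower semicontinuous along weakly convergent sequences; Gaussian smoothing does not restore the missing first-moment convergence (e.g.\ if $Q_{\theta_k}$ places mass $1/k$ at a point at distance $k^{2}$ from the origin, the smoothed measures still have exploding means while converging weakly). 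So the ``in fact continuity'' step fails, and with it your access to joint measurability; your related remark that uniform integrability is ``automatic'' along $\theta\to\overline\theta$ is false for the same reason, and the $\wass\le\gwass+2\sigma\sqrt d$-type bound points in the wrong direction.

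The paper closes exactly this gap by exploiting continuity in the \emph{measure} variable instead: by Lemma \ref{LEMMA:MSWD_lsc}(ii), $\mu\mapsto\gwass(\mu,Q_\theta)$ is continuous on the separable metric space $\big(\cP_1(\RR^d),\wass\big)$, while $\theta\mapsto\gwass(\mu,Q_\theta)$ is l.s.c.\ and hence Borel; the Carath\'eodory-type lemma (Lemma 4.51 in \cite{aliprantis2006}) then gives joint measurability of $(\mu,\theta)\mapsto\gwass(\mu,Q_\theta)$, and composing with the continuous map $x\mapsto P_n(x)\in\big(\cP_1(\RR^d),\wass\big)$ delivers the joint measurability that Brown--Purves needs. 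Your remaining ingredients (measurability in $\omega$ for fixed $\theta$, lower semicontinuity in $\theta$, Weierstrass on compact $\Theta$, $\sigma$-compactness of the sections) are correct, so with the joint-measurability step repaired as indicated your argument coincides with the paper's proof.
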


Next, we establish consistency of the MSWE. The proof relies on \cite[Theorem 7.33]{rockafellar2009variational}. To apply it, we verify epi-convergence of the map~$\theta \mapsto \gwass(P_n,Q_\theta)$ towards $\theta \mapsto \gwass(P,Q_\theta)$ (after proper extensions). See Supplement \ref{SEC:MSWD_inf_argmin_proof} for details.


\begin{theorem}[MSWE consistency]\label{TM:MSWD_inf_argmin} Assume that the map $\theta \mapsto Q_{\theta}$ is continuous relative to the weak topology.
Then, we have $\infgwassn\to\infgwass$ a.s. 
In addition, there exists an event with probability one on which the following holds: for any sequence $\{ \hat{\theta}_{n} \}_{n \in \mathbb{N}}$ of measurable estimators such that $\gwass (P_{n},Q_{\hat{\theta}_{n}}) \le \inf_{\theta \in \Theta}\gwass (P_n,Q_{\theta}) + o(1)$, the set of cluster points of $\{ \hat{\theta}_{n} \}_{n \in \mathbb{N}}$ is included in $\argmin_{\theta \in \Theta} \gwass (P,Q_{\theta})$. In particular, if $\argmin_{\theta \in \Theta} \gwass (P,Q_{\theta})$ is unique, i.e., $\argmin_{\theta \in \Theta} \gwass (P,Q_{\theta}) = \{ \theta^{\star} \}$, then $\hat{\theta}_{n} \to \theta^{\star}$ a.s.

\end{theorem}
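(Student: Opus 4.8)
The plan is to invoke the epi-convergence machinery from variational analysis, specifically \cite[Theorem 7.33]{rockafellar2009variational}, which says that if a sequence of functions epi-converges to a limit function and the limit attains its minimum, then the infima converge and cluster points of approximate minimizers land in the argmin of the limit. First I would set $h_n(\theta) := \gwass(P_n,Q_\theta)$ and $h(\theta) := \gwass(P,Q_\theta)$, both viewed as functions on $\R^{d_0}$ by extending them to $+\infty$ outside the compact set $\Theta$ (this keeps the lower-semicontinuity and properness bookkeeping clean). The target is to show that, on an event of probability one, $h_n$ epi-converges to $h$; then Theorem 7.33 delivers both claims: $\inf_\theta h_n \to \inf_\theta h$ and the inclusion of cluster points of any $o(1)$-approximate minimizing sequence in $\argmin h$, with the uniqueness corollary being immediate.

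The core work is establishing epi-convergence, which by definition requires two one-sided conditions at every $\theta$: (i) a liminf inequality --- for every sequence $\theta_n \to \theta$, $\liminf_n h_n(\theta_n) \ge h(\theta)$; and (ii) a limsup/recovery-sequence condition --- there exists $\theta_n \to \theta$ with $\limsup_n h_n(\theta_n) \le h(\theta)$. The key simplification is that here we can get something much stronger, namely locally uniform convergence of $h_n$ to $h$, which implies epi-convergence. The two ingredients for this are: (a) $\theta \mapsto \gwass(P,Q_\theta)$ is continuous on $\Theta$ --- this follows because $\theta\mapsto Q_\theta$ is weakly continuous by hypothesis, weak convergence on $\cP_1$ plus a uniform-integrability/moment-tightness argument upgrades to $\wass$-convergence of $Q_{\theta_k}\ast\Gauss$, and $\wass$ is continuous with respect to itself; one must be slightly careful that weak convergence of $Q_{\theta_k}$ does not automatically give $\wass$-convergence without a moment bound, but the Gaussian convolution and the compactness of $\Theta$ (which, together with weak continuity and e.g.\ a mild uniform first-moment condition on the model family, can be arranged) handle this. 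And (b) a uniform law of large numbers: $\sup_{\theta\in\Theta}\big|\gwass(P_n,Q_\theta) - \gwass(P,Q_\theta)\big| \to 0$ a.s. Here I would exploit the triangle inequality for $\gwass$ (a genuine metric by \cite{Goldfeld2020GOT}), which gives $\big|\gwass(P_n,Q_\theta) - \gwass(P,Q_\theta)\big| \le \gwass(P_n,P)$ uniformly in $\theta$, and then $\gwass(P_n,P)\to 0$ a.s.\ follows from $\EE[\gwass(P_n,P)]\to 0$ (which holds under the standing $P\in\cP_1$ assumption, e.g.\ via the bound in Theorem \ref{thm: CLT for W1} or the cruder $\wass(P\ast\Gauss,P)$ split plus \cite{FournierGuillin2015}) together with a concentration/Borel--Cantelli argument, or directly from a.s.\ weak convergence of $P_n$ to $P$ plus moment control. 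This single-line uniform bound is what makes the uniform LLN essentially free and is the crux of why the smooth distance behaves so well here.

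With (a) and (b) in hand, on the probability-one event where $\gwass(P_n,P)\to 0$, we have $\sup_\Theta|h_n-h|\to 0$, hence $h_n$ converges to $h$ uniformly on $\Theta$ (and the $+\infty$ extension makes this uniform convergence on $\R^{d_0}$ in the extended sense), which implies epi-convergence of $h_n$ to $h$. Since $\Theta$ is compact and $h$ is continuous (hence lsc and proper on $\Theta$), $\argmin_\Theta h$ is nonempty, so \cite[Theorem 7.33]{rockafellar2009variational} applies: $\inf_\theta h_n(\theta)\to\inf_\theta h(\theta)$, i.e.\ $\infgwassn\to\infgwass$ a.s.; and for any sequence $\hat\theta_n$ with $h_n(\hat\theta_n)\le\inf_\Theta h_n + o(1)$, every cluster point of $\{\hat\theta_n\}$ lies in $\argmin_\Theta h = \argmin_{\theta\in\Theta}\gwass(P,Q_\theta)$. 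In the unique-minimizer case $\argmin_\Theta h=\{\theta^\star\}$, the sequence $\{\hat\theta_n\}$ lives in the compact set $\Theta$ and has $\theta^\star$ as its only cluster point, so $\hat\theta_n\to\theta^\star$ a.s. The main obstacle I anticipate is the careful justification of step (a) --- promoting weak continuity of $\theta\mapsto Q_\theta$ to continuity of $\theta\mapsto\gwass(P,Q_\theta)$ --- since weak convergence alone does not control the first moments needed for $\wass$; this is where the Gaussian smoothing and the structure of the model family (or an explicit uniform-moment assumption on $\{Q_\theta\}_{\theta\in\Theta}$, as in \cite[Assumption 2.3]{bernton2019parameter}) must be used, and it is the only place where more than a routine argument is required.
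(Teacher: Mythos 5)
Your overall skeleton---extend $\theta\mapsto\gwass(P_n,Q_\theta)$ and $\theta\mapsto\gwass(P,Q_\theta)$ by $+\infty$ outside the compact set $\Theta$, establish a.s.\ epi-convergence, and invoke \cite[Theorem 7.33]{rockafellar2009variational}---is exactly the paper's strategy, and your central estimate $\sup_{\theta\in\Theta}\big|\gwass(P_n,Q_\theta)-\gwass(P,Q_\theta)\big|\le\gwass(P_n,P)\to0$ a.s.\ is correct (triangle inequality for the metric $\gwass$ \cite{Goldfeld2020GOT}, together with $\gwass(P_n,P)\le\wass(P_n,P)\to0$ a.s.\ for $P\in\cP_1(\RR^d)$). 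The genuine problem is your ingredient (a): continuity of $\theta\mapsto\gwass(P,Q_\theta)$ does \emph{not} follow from the theorem's only hypothesis, namely weak continuity of $\theta\mapsto Q_\theta$. Weak convergence gives no control of first moments, and $\gwass$ metrizes the $\wass$ topology, not the weak topology: for instance $\Theta=[0,1]$, $Q_0=\delta_0$ and $Q_\theta=(1-\theta)\delta_0+\theta\,\delta_{v/\theta^2}$ for $\theta>0$ is weakly continuous with all $Q_\theta\in\cP_1(\RR^d)$, yet $\gwass(\delta_0,Q_\theta)\to\infty$ as $\theta\downarrow0$ while $\gwass(\delta_0,Q_0)=0$. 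So the upgrade from weak to $\wass$-convergence that you flag as the ``main obstacle'' is not merely delicate, it is false under the stated assumptions, and your proposed repair (a uniform first-moment condition on $\{Q_\theta\}_{\theta\in\Theta}$, \`a la \cite[Assumption 2.3]{bernton2019parameter}) would prove a different theorem than the one stated. Relatedly, ``uniform convergence implies epi-convergence'' requires the limit to be lower semicontinuous; weak continuity of $\theta\mapsto Q_\theta$ does deliver exactly that (Lemma \ref{LEMMA:MSWD_lsc}(i) plus closedness of $\Theta$), but not continuity.

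The gap is easily closed because continuity is never needed. With $\epsilon_n:=\gwass(P_n,P)\to0$ a.s., the liminf half of epi-convergence follows from $g_n(\theta_n)\ge g(\theta_n)-\epsilon_n$ for $\theta_n\in\Theta$ and lower semicontinuity of $g$, and the limsup half follows from the constant recovery sequence, $g_n(\theta)\le g(\theta)+\epsilon_n$; properness, level-boundedness (compactness of $\Theta$), and nonemptiness of $\argmin_{\theta\in\Theta} g$ (Weierstrass, Lemma \ref{lem: Weierstrass}) are handled as in the paper, after which \cite[Theorem 7.33]{rockafellar2009variational} gives all three claims. With this correction your route is a legitimate and in fact slightly more direct alternative to the paper's verification, which instead checks the inf-over-compact-sets and inf-over-open-sets characterization of epi-convergence using Weierstrass-extracted near-minimizers and the same triangle-inequality trick, relying only on lower semicontinuity throughout. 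One further caution: deducing $\gwass(P_n,P)\to0$ a.s.\ from the expectation bound of Theorem \ref{thm: CLT for W1} plus concentration would impose moment conditions beyond $P\in\cP_1(\RR^d)$; the route consistent with the theorem's hypotheses is a.s.\ weak convergence of $P_n$ together with the strong law for first moments, which yields $\wass(P_n,P)\to0$ a.s.\ (cf.\ \cite{villani2008optimal}) and hence dominates $\gwass(P_n,P)$.
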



\subsection{Limit Distributions}

We study the limit distributions of the MSWE and the associated SWD. Results are presented for the `well-specified' setting, i.e., when $P=Q_{\theta^\star}$ for some $\theta^\star$ in the interior of $\Theta\subset\RR^{d_0}$. Extensions to the `misspecified' case are straightforward (cf. \cite[Theorem B.8]{bernton2019parameter}). Our derivation leverages the method of \cite{pollard1980minimum} for MDE analysis over normed spaces. To make the connection, we need some definitions. 

For any $G\in\ell^{\infty}(\mathsf{Lip}_{1,0})$, define $\|G\|_{\mathsf{Lip}_{1,0}}:=\sup_{f\in\mathsf{Lip}_{1,0}}|G(f)|$. With any $Q\in\cP_1(\RR^d)$, associate the functional $Q^{(\sigma)}:\mathsf{Lip}_{1,0}\to\RR$ defined by $Q^{(\sigma)}(f):=Q(f\ast\gauss)=(Q\ast\Gauss)(f)$. Note that $\left\|Q^{(\sigma)}\right\|_{\mathsf{Lip}_{1,0}}:=\sup_{f\in\mathsf{Lip}_{1,0}}\left|Q^{(\sigma)}(f)\right|$ is finite as $Q\in\cP_1(\RR^d)$ and $|(f\ast\varphi_{\sigma})(x)| \le \|x\| + \sigma \sqrt{d}$ for any $f \in \mathsf{Lip}_{1,0}$. Consequently, $Q^{(\sigma)}\in\ell^\infty(\mathsf{Lip}_{1,0})$ for any $Q\in\cP_1(\RR^d)$. Finally, observe that $\gwass(P,Q)=\lip{P^{(\sigma)}-Q^{(\sigma)}}$, for any $P,Q\in\cP_1(\RR^d)$ (cf. Supplement~\ref{SUBSEC:CLT for W1 proof}).

\paragraph{SWD limit distribution.} We start from the limit distribution of the (scaled) infimized SWD. This result is central for deriving the limiting MSWE distribution (see Theorem \ref{TM:MSWD_argmin_CLT} and Corollary \ref{cor:unique_solution} below). Theorem \ref{TM:MSWD_inf_CLT} is proven in Supplement \ref{SUBSEC:MSWD_inf_CLT_proof} via an adaptation of the argument from \cite[Theorem 4.2]{pollard1980minimum}.

\begin{theorem}[Minimal SWD limit distribution]\label{TM:MSWD_inf_CLT}
Let $P$ satisfy the conditions of Theorem \ref{thm: CLT for W1}. In addition, suppose that (i) the map $\theta \mapsto Q_{\theta}$ is continuous relative to the weak topology; (ii) $P \ne Q_{\theta}$ for any $\theta \ne \theta^{\star}$; 
(iii) there exists a vector-valued functional $D^{(\sigma)}\in (\ell^\infty(\mathsf{Lip}_{1,0}))^{d_0}$ such that $\lip{Q^{(\sigma)}_\theta-Q^{(\sigma)}_{\theta^\star}-\langle\theta-\theta^\star,D^{(\sigma)}\rangle}=o(\|\theta-\theta^\star\|)$ as $\theta \to \theta^\star$, where $\langle t,D^{(\sigma)} \rangle:=\sum_{i=1}^{d_0} t_iD_i^{(\sigma)}$ for $t\in\RR^{d_0}$; (iv) the derivative $D^{(\sigma)}$ is \textit{nonsingular} in the sense that $\langle t,D^{(\sigma)} \rangle\neq 0$, i.e., $\langle t,D^{(\sigma)} \rangle\in\ell^\infty(\mathsf{Lip}_{1,0})$ is not the zero functional for all $0\neq t\in\RR^{d_0}$.
Then, $\sqrt{n}\inf_{\theta\in\Theta}\gwass(P_n,Q_\theta) \stackrel{d}{\to} \inf_{t \in \RR^{d_0}}\lip{G_P^{(\sigma)}-\ip{t,D^{(\sigma)}}},
    $ where $G_{P}^{(\sigma)}$ is the Gaussian process from Theorem \ref{thm: CLT for W1}. 
\end{theorem}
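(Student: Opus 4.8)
The plan is to reduce the limit distribution of $\sqrt{n}\inf_{\theta\in\Theta}\gwass(P_n,Q_\theta)$ to the limit distribution of the Gaussian-smoothed empirical process established in Theorem \ref{thm: CLT for W1}, following the strategy of \cite[Theorem 4.2]{pollard1980minimum} for minimum distance estimation over normed spaces. The natural ambient normed space here is $\ell^\infty(\mathsf{Lip}_{1,0})$ with norm $\lip{\cdot}$, and the key identity is $\gwass(P_n,Q_\theta)=\lip{P_n^{(\sigma)}-Q_\theta^{(\sigma)}}$ (recorded just before the statement). Writing $Z_n:=\sqrt{n}(P_n^{(\sigma)}-P^{(\sigma)})$, which by Theorem \ref{thm: CLT for W1} converges in distribution in $\ell^\infty(\mathsf{Lip}_{1,0})$ to the tight Gaussian element $G_P^{(\sigma)}$ (using $P=Q_{\theta^\star}$), the first step is to rewrite
\[
\sqrt{n}\,\gwass(P_n,Q_\theta)=\lip{Z_n-\sqrt{n}\,(Q_\theta^{(\sigma)}-Q_{\theta^\star}^{(\sigma)})}.
\]
Substituting the reparametrization $t=\sqrt{n}(\theta-\theta^\star)$ and using assumption (iii) to replace $\sqrt{n}(Q_\theta^{(\sigma)}-Q_{\theta^\star}^{(\sigma)})$ by $\langle t,D^{(\sigma)}\rangle$ up to an $o(1)$ error (on shrinking neighborhoods of $\theta^\star$), the infimand should look, locally, like $\lip{Z_n-\langle t,D^{(\sigma)}\rangle}$, whose infimum over $t$ converges by the continuous mapping theorem to $\inf_{t\in\RR^{d_0}}\lip{G_P^{(\sigma)}-\langle t,D^{(\sigma)}\rangle}$.

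The steps I would carry out, in order: (1) Record the deterministic lower bound $\inf_\theta\sqrt{n}\gwass(P_n,Q_\theta)\ge \inf_{t\in\RR^{d_0}}\lip{Z_n-\langle t,D^{(\sigma)}\rangle}-R_n$, where $R_n$ collects the linearization remainder; show that the map $G\mapsto\inf_{t}\lip{G-\langle t,D^{(\sigma)}\rangle}$ is continuous (in fact $1$-Lipschitz) on $\ell^\infty(\mathsf{Lip}_{1,0})$, and that it is finite and attained because of the nonsingularity assumption (iv)—assumption (iv) guarantees the map $t\mapsto\langle t,D^{(\sigma)}\rangle$ is injective and "coercive" in the sense that $\lip{\langle t,D^{(\sigma)}\rangle}\to\infty$ as $\|t\|\to\infty$ (this uses a compactness/linear-algebra argument over the unit sphere in $\RR^{d_0}$, since $t\mapsto\lip{\langle t,D^{(\sigma)}\rangle}$ is a norm on $\RR^{d_0}$). (2) Produce the matching upper bound: by consistency (Theorem \ref{TM:MSWD_inf_argmin}, whose hypotheses (i),(ii) are assumed here) any near-minimizer $\hat\theta_n\to\theta^\star$ a.s., and moreover one must upgrade this to $\sqrt{n}$-consistency, i.e. $\sqrt{n}\|\hat\theta_n-\theta^\star\|=O_P(1)$; this is where the coercivity from (iv) combined with tightness of $Z_n$ is used — outside a ball $\{\|t\|\le C\}$ the value $\lip{Z_n-\langle t,D^{(\sigma)}\rangle}$ exceeds the value at $t=0$ with high probability. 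Once $\hat\theta_n$ is confined to an $O(n^{-1/2})$ neighborhood, the remainder $R_n$ is genuinely $o_P(1)$ uniformly over the relevant range of $t$, by assumption (iii). (3) Combine the two bounds and invoke the continuous mapping theorem / Slutsky to conclude $\sqrt{n}\inf_\theta\gwass(P_n,Q_\theta)\stackrel{d}{\to}\inf_{t\in\RR^{d_0}}\lip{G_P^{(\sigma)}-\langle t,D^{(\sigma)}\rangle}$.

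I also need to handle two technical points woven through the above. First, the suprema defining $\lip{\cdot}$ are over the non-separable index set $\mathsf{Lip}_{1,0}$, so measurability of the random variables $\inf_\theta\gwass(P_n,Q_\theta)$ and of the approximating quantities must be taken care of — but $\gwass(P_n,Q_\theta)$ is measurable by the duality with a countable subfamily of Lipschitz functions (a density argument, as used in Theorem \ref{thm: CLT for W1}), and the measurable-selection of $\hat\theta_n$ is Theorem \ref{TM:MSWD_measurable}. Second, the convergence $Z_n\rightsquigarrow G_P^{(\sigma)}$ is weak convergence in $\ell^\infty(\mathsf{Lip}_{1,0})$ in the Hoffmann–Jørgensen sense; to pass it through the $\inf_t\lip{\cdot-\langle t,D^{(\sigma)}\rangle}$ functional I would invoke the extended continuous mapping theorem (\cite[Theorem 1.11.1]{VaWe1996}), checking that the functional is continuous at every point of $\ell^\infty(\mathsf{Lip}_{1,0})$, not merely at points of a separable support.

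The main obstacle I anticipate is step (2): establishing $\sqrt{n}$-consistency of the minimizer (tightness of $\sqrt{n}(\hat\theta_n-\theta^\star)$) so that the linearization remainder can be discarded. This requires carefully quantifying how the nonsingularity condition (iv) forces $\lip{\langle t,D^{(\sigma)}\rangle}$ to grow linearly in $\|t\|$, then balancing this growth against the stochastic fluctuation $\lip{Z_n}=O_P(1)$ and the deterministic curvature error from (iii) — a standard but delicate argmax-type argument (cf. \cite[Theorem 4.2]{pollard1980minimum} and its proof), here complicated by working in the infinite-dimensional space $\ell^\infty(\mathsf{Lip}_{1,0})$ rather than a Euclidean or Hilbert space.
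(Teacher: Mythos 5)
Your proposal is correct and follows essentially the same route as the paper's proof: Pollard's Theorem 4.2 scheme with localization to a neighborhood of $\theta^\star$ via consistency, a $\sqrt{n}$-scale localization obtained by balancing the coercivity bound $\lip{\ip{t,D^{(\sigma)}}}\geq C\|t\|$ (from nonsingularity plus compactness of the unit sphere) against $\lip{\sqrt{n}(P_n^{(\sigma)}-P^{(\sigma)})}=O_{\PP}(1)$, uniform linearization via (iii), and then the continuous mapping theorem applied to the $1$-Lipschitz functional $L\mapsto\inf_{t}\lip{L-\ip{t,D^{(\sigma)}}}$. The paper implements your anticipated "obstacle" in step (2) exactly as you sketch, restricting the infimum to the random set $\{\sqrt{n}\|\theta-\theta^\star\|\leq\xi_n\}$ with $\xi_n=\tfrac{4\sqrt{n}}{C}\gwass(P_n,P)$.
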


\begin{remark}[Norm differentiability]\label{REM:norm_diff}
Condition (iii) in Theorem \ref{TM:MSWD_inf_CLT} is called `norm differentiability' in \cite{pollard1980minimum}. In these terms, the theorem assumes that the map $\theta \mapsto Q^{(\sigma)}_\theta, \Theta \to \ell^{\infty}(\mathsf{Lip}_{1,0})$, is norm differentiable around $\theta^\star$ with derivative $D^{(\sigma)}$. This allows approximating the map $\theta \mapsto Q^{(\sigma)}_\theta$ by the affine function $Q^{(\sigma)}_{\theta^\star}+\ip{\theta-\theta^\star,D^{(\sigma)}}$ near $\theta^\star$. Together with the result of Theorem \ref{thm: CLT for W1} and the right reparameterization, norm differentiability is key for establishing the theorem. 
\end{remark}

\begin{remark}[Primitive conditions for norm differentiability]
Suppose that $\{ Q_{\theta} \}_{\theta \in \Theta}$ is dominated by a common Borel measure $\nu$ on $\R^{d}$, and let $q_{\theta}$ denote the density of $Q_{\theta}$ with respect to $\nu$, i.e., $\dd Q_{\theta} = q_{\theta} \dd\nu$. Then, $Q_{\theta} \ast \Gauss$ has Lebesgue density $x \mapsto \int \varphi_{\sigma}(x-t) q_{\theta}(x) \dd \nu (t)$. Assume that $q_{\theta}$ admits the Taylor expansion $q_{\theta}(x) = q_{\theta^{\star}} (x) + \dot{q}_{\theta^{\star}}(x) \cdot (\theta - \theta^{\star}) + r_{\theta}(x) \cdot (\theta - \theta^{\star})$ with $r_{\theta}(x) = o(1)$ as $\theta \to \theta^{\star}$. Then, one may verify that Condition (iii) holds with $D^{(\sigma)}(f) = \int f(x)\int \varphi_{\sigma}(x-t) \dot{q}_{\theta^{\star}}(t) \dd \nu (t) \dd x = \int (f \ast\varphi_{\sigma}) (t) \dot{q}_{\theta^{\star}} (t) \dd\nu (t)$, for $f \in \mathsf{Lip}_{1,0}$, provided that $\int (1+\|t\|) \| \dot{q}_{\theta^{\star}}(t) \| \dd \nu(t) < \infty$ and $\int (1+\| t \|) \| r_{\theta}(t) \| \dd\nu(t) = o(1)$ (use the fact that $|f(t)| \le \| t \|$, for any $f \in \mathsf{Lip}_{1,0}$).
\end{remark}

\paragraph{MSWE limit distribution.} We study convergence in distribution of the MSWE. Optimally, the limit distribution of $\sqrt{n}(\hat{\theta}_{n}-\theta^\star)$, for some $\hat{\theta}_{n} \in \argmin_{\theta \in \Theta} \gwass (P_{n},Q_{\theta})$, is the object of interest. However, a limit is guaranteed to exist only when the (convex) function $t \mapsto \lip{G_P^{(\sigma)}-\ip{t,D^{(\sigma)}}}$ has a unique minimum a.s. (see Corollary \ref{cor:unique_solution} below). To avoid this stringent assumption, before treating $\sqrt{n}(\hat{\theta}_{n}-\theta^\star)$, we first consider the set of approximate minimizers $\hat{\Theta}_n:=\big\{ \theta\in\Theta: \gwass(P_n,Q_\theta)\leq \inf_{\theta'\in\Theta}\gwass(P_n,Q_{\theta'})+\lambda_n/\sqrt{n} \big\}$, where $\{\lambda_n\}_{n\in\NN}$ is an arbitrary $o_{\PP}(1)$ sequence. 

We show that $\hat{\Theta}_n\subset \theta^\star+n^{-1/2}K_n$ for some (random) sequence of compact convex sets $\{K_n\}_{n\in\NN}$ with inner probability approaching one. Resorting to inner probability seems inevitable since the event $\{\hat{\Theta}_n\subset \theta^\star+n^{-1/2}K_n\}$ need not be measurable in general (see \cite[Section 7]{pollard1980minimum}). To define such sequence $\{K_n\}_{n\in\NN}$, for any $L\in\ell^{\infty}(\mathsf{Lip}_{1,0})$ and $\beta\geq 0$, let $K(L,\beta):=\left\{t\in\RR^{d_0}: \lip{L-\ip{t,D^{(\sigma)}}}\leq \inf_{t'\in\RR^d}\lip{L-\ip{t',D^{(\sigma)}}}\mspace{-8mu}+\mspace{-2mu}\beta\right\}$. Lemma 7.1 of \cite{pollard1980minimum} ensures that for any $\beta\geq 0$, $L \mapsto K(L,\beta)$ is a measurable map~from $\ell^{\infty}(\mathsf{Lip}_{1,0})$ into  $\mathfrak{K}$ -- the class of all compact, convex, and nonempty subsets of $\RR^{d_0}$ -- endowed with the Hausdorff topology. 
That is, the topology induced by the metric $d_\mathsf{H}(K_1,K_2):=\inf\big\{\delta>0:$ $K_1^{\delta}\supset K_2,\ K_2^{\delta}\supset K_1\big\}$, where $K^{\delta}:=\bigcup_{x\in K}\big\{y\in\RR^{d_0}: \|y-x\|\leq \delta\big\}$ is the $\delta$-blowup of $K$. 

\begin{theorem}[MSWE limit distribution]\label{TM:MSWD_argmin_CLT}
Under the conditions of Theorem \ref{TM:MSWD_inf_CLT}, there exists a sequence of nonnegative reals  $\beta_{n} \downarrow 0$ such that (i) $\PP_*\big( \hat{\Theta}_n\subset \theta^\star+n^{-1/2}K(\GG_n^{(\sigma)},\beta_n\big)\big)\to 1$,
where $\GG_n^{(\sigma)}:=\sqrt{n}(P_n^{(\sigma)}-P^{(\sigma)})$ is the (smooth) empirical process and $\PP_*$ denotes inner probability; and  (ii) $K(\GG_n^{(\sigma)},\beta_{n} )\stackrel{d}{\to} K(G_P^{(\sigma)},0)$ as $\mathfrak{K}$-valued random variables.
\end{theorem}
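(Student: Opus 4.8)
The plan is to follow Pollard's minimum-distance-estimation machinery over the normed space $\ell^\infty(\mathsf{Lip}_{1,0})$, exactly as set up in the paragraph preceding the theorem, and to transfer the weak convergence $\GG_n^{(\sigma)} \stackrel{d}{\to} G_P^{(\sigma)}$ (which is Theorem~\ref{thm: CLT for W1} rephrased: $\sqrt{n}\gwass(P_n,P)=\lip{\GG_n^{(\sigma)}}$ and the function class $\{f\ast\gauss:f\in\mathsf{Lip}_{1,0}\}$ is $P$-Donsker) through the set-valued maps $L\mapsto K(L,\beta)$. The two claims are handled separately but both rest on the same localization/reparametrization step.

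\textbf{Step 1 (reparametrization and localization).} Writing $\theta=\theta^\star+t/\sqrt n$ and using norm differentiability (Condition (iii)), I would show
$\sqrt{n}\,\gwass(P_n,Q_{\theta^\star+t/\sqrt n})
= \lip{\GG_n^{(\sigma)} - \ip{t,D^{(\sigma)}} + \sqrt n\,\rho_n(t)}$
where $\rho_n(t)=Q^{(\sigma)}_{\theta^\star+t/\sqrt n}-Q^{(\sigma)}_{\theta^\star}-\ip{t/\sqrt n,D^{(\sigma)}}$ is $o(\|t\|/\sqrt n)$ uniformly on compacts by (iii). Consistency (Theorem~\ref{TM:MSWD_inf_argmin}) plus the nonsingularity of $D^{(\sigma)}$ in (ii),(iv) localizes: with inner probability tending to one, $\sqrt n(\hat\Theta_n-\theta^\star)$ is contained in a fixed-radius ball, so the remainder term is uniformly negligible there. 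This is the step I expect to be the main obstacle — one must (a) justify the interchange of the supremum over $\mathsf{Lip}_{1,0}$ with the $o(\cdot)$ estimate uniformly in $t$ over a compact set, and (b) produce the a priori $\sqrt n$-rate bound on $\hat\Theta_n$ from the identifiability conditions; this is where the argument of \cite[Section 7]{pollard1980minimum}, adapted from $\RR^d$ to $\ell^\infty(\mathsf{Lip}_{1,0})$ using that $\lip{\cdot}$ is exactly the dual norm appearing in $\gwass$, must be carried out carefully, and why the statement is phrased with inner probability $\PP_*$.

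\textbf{Step 2 (claim (i)).} Given Step 1, on the localizing event $\hat\Theta_n\subset\theta^\star+n^{-1/2}B$ for a fixed ball $B$, and any $t$ with $\theta^\star+t/\sqrt n\in\hat\Theta_n$ satisfies
$\lip{\GG_n^{(\sigma)}-\ip{t,D^{(\sigma)}}} \le \inf_{t'}\lip{\GG_n^{(\sigma)}-\ip{t',D^{(\sigma)}}} + \beta_n$
once $\beta_n$ is chosen to dominate $\lambda_n/\sqrt n$ plus the uniform remainder bound from Step 1 plus the error between $\inf_{\theta\in\Theta}\gwass(P_n,Q_\theta)$ and $\inf_{t'}\lip{\GG_n^{(\sigma)}-\ip{t',D^{(\sigma)}}}$. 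Picking such a deterministic sequence $\beta_n\downarrow 0$ (possible because each of these error terms is $o_\PP(1)$ and one can take $\beta_n$ to decrease slowly enough, then pass to inner probability) yields $\PP_*\big(\hat\Theta_n\subset\theta^\star+n^{-1/2}K(\GG_n^{(\sigma)},\beta_n)\big)\to1$.

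\textbf{Step 3 (claim (ii)).} For the weak convergence $K(\GG_n^{(\sigma)},\beta_n)\stackrel{d}{\to}K(G_P^{(\sigma)},0)$ as $\mathfrak K$-valued random variables, I would use the measurability of $(L,\beta)\mapsto K(L,\beta)$ from \cite[Lemma 7.1]{pollard1980minimum}, the weak convergence $\GG_n^{(\sigma)}\stackrel{d}{\to}G_P^{(\sigma)}$ in $\ell^\infty(\mathsf{Lip}_{1,0})$, and an extended continuous mapping theorem (e.g. \cite[Theorem 1.11.1]{VaWe1996}): it suffices to show that whenever $L_n\to L$ in $\ell^\infty(\mathsf{Lip}_{1,0})$ and $\beta_n\downarrow0$, one has $K(L_n,\beta_n)\to K(L,0)$ in the Hausdorff metric $d_\mathsf H$, provided $L$ lies in the set where $t\mapsto\lip{L-\ip{t,D^{(\sigma)}}}$ has a nonempty compact minimizing set — which holds a.s. for $L=G_P^{(\sigma)}$ by nonsingularity of $D^{(\sigma)}$ and tightness of $G_P^{(\sigma)}$. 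Upper semicontinuity of $K(\cdot,\cdot)$ (minimizers of a converging sequence of convex functions cannot escape) gives one inclusion; lower semicontinuity at $\beta=0$ uses that the level sets $K(L,\beta)$ shrink continuously to $K(L,0)$ as $\beta\downarrow0$ because the objective is convex with bounded sublevel sets. Invoking Skorokhod's representation to pass from distributional to a.s. convergence of $\GG_n^{(\sigma)}$ makes this deterministic continuity statement directly applicable, and the conclusion follows.
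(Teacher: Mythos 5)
Your Steps 1--2 are consistent with how the paper actually handles this result: Theorem \ref{TM:MSWD_argmin_CLT} is obtained by reusing the localization/reparametrization and the approximation $\sqrt n\,\gwass(P_n,Q_{\theta^\star+t/\sqrt n})=\lip{\GG_n^{(\sigma)}-\ip{t,D^{(\sigma)}}}+o_\PP(1)$ already established in the proof of Theorem \ref{TM:MSWD_inf_CLT}, and then invoking the argument of Pollard, Section 7.2, verbatim; your choice of $\beta_n$ dominating $\lambda_n/\sqrt n$ plus the remainder terms for claim (i) is exactly in that spirit.

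There is, however, a genuine gap in Step 3. You reduce claim (ii) to the deterministic statement that $K(L_n,\beta_n)\to K(L,0)$ in $d_\mathsf{H}$ whenever $L_n\to L$ in $\ell^\infty(\mathsf{Lip}_{1,0})$ and $\beta_n\downarrow 0$. This is false in general when $K(L,0)$ is not a singleton, and the whole point of Theorem \ref{TM:MSWD_argmin_CLT} (as opposed to Corollary \ref{cor:unique_solution}) is precisely to avoid the a.s.-uniqueness assumption. The outer (upper semicontinuity) inclusion is fine: any $t_n\in K(L_n,\beta_n)$ is a $(\beta_n+2\|L_n-L\|_{\mathsf{Lip}_{1,0}})$-minimizer of the limit objective, hence approaches $K(L,0)$ by convexity and coercivity. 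But the inner inclusion needs $\beta_n\gtrsim \|L_n-L\|_{\mathsf{Lip}_{1,0}}$: a point $t\in K(L,0)$ is only guaranteed to lie in $K(L_n,2\|L_n-L\|_{\mathsf{Lip}_{1,0}})$, and if $\beta_n$ vanishes faster than the perturbation, a flat limiting objective loses part of its argmin set. A one-dimensional caricature: take $h(s)=\max(0,|s|-1)$ with argmin $[-1,1]$ and $h_n(s)=\max\big(h(s),\epsilon_n s/2\big)$, so $\|h_n-h\|_\infty\le\epsilon_n$, yet for $\beta_n\ll\epsilon_n$ the set $\{h_n\le \min h_n+\beta_n\}\approx[-1,2\beta_n/\epsilon_n]$ converges to $[-1,0]\neq[-1,1]$. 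The fix is the one in Pollard's Section 7.2 (and implicitly in the paper's statement that a \emph{specific} sequence $\beta_n\downarrow0$ exists): after passing to an almost-sure representation of $\GG_n^{(\sigma)}\to G_P^{(\sigma)}$, choose $\beta_n$ decreasing slowly enough that $\PP\big(2\|\GG_n^{(\sigma)}-G_P^{(\sigma)}\|_{\mathsf{Lip}_{1,0}}+(\text{Step 1 remainders})>\beta_n\big)\to0$; this single sequence then serves both claims (i) and (ii), whereas your write-up treats the $\beta_n$ of claim (ii) as if an arbitrary null sequence would do. With that coordination of $\beta_n$ the rest of your Step 3 (measurability from Lemma 7.1 of Pollard, Skorokhod representation, Hausdorff convergence) goes through.
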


Given Theorem \ref{TM:MSWD_inf_CLT}, the proof of Theorem \ref{TM:MSWD_argmin_CLT} follows by a verbatim repetition of the argument from \cite[Section 7.2]{pollard1980minimum}. The details are therefore omitted. If $\argmin_{t \in \R^{d_{0}}} \lip{G_P^{(\sigma)}-\ip{t,D^{(\sigma)}}}$ is unique a.s. (a nontrivial assumption), then Theorem \ref{TM:MSWD_argmin_CLT} simplifies as~follows.\footnote{Note that $\argmin_{t \in \R^{d_{0}}} \lip{G_P^{(\sigma)}-\ip{t,D^{(\sigma)}}}\neq \emptyset$ provided that $D^{(\sigma)}$ is nonsingular, since the latter guarantees that $\lip{G_P^{(\sigma)}-\ip{t,D^{(\sigma)}}} \to \infty$ as $\| t \| \to \infty$. }

\begin{corollary}[Simplified MSWE limit distribution]
\label{cor:unique_solution}
Assume the conditions of Theorem \ref{TM:MSWD_inf_CLT}. Let $\{ \hat{\theta}_{n} \}_{n \in \mathbb{N}}$ be a sequence measurable estimators such that $\gwass (P_{n},Q_{\hat{\theta}_{n}}) \le \inf_{\theta \in \Theta} \gwass (P_{n},Q_{\theta}) + o_{\PP}(n^{-1/2})$. Then, provided that $\argmin_{t \in \R^{d_{0}}} \lip{G_P^{(\sigma)}-\ip{t,D^{(\sigma)}}}$ is unique a.s., we have $\sqrt{n}(\hat{\theta}_{n} - \theta^{\star}) \stackrel{d}{\to} \argmin_{t \in \R^{d_{0}}}\lip{G_P^{(\sigma)}-\ip{t,D^{(\sigma)}}}$.
\end{corollary}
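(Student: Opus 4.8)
The plan is to derive Corollary~\ref{cor:unique_solution} as a direct consequence of Theorem~\ref{TM:MSWD_argmin_CLT}, using the fact that the argmin of a function is a continuous functional of the corresponding level-set map at the level $\beta=0$ whenever the minimizer is unique. First I would set $\hat{\Theta}_n$ to be the singleton-friendly approximate-minimizer set associated with $\lambda_n = \sqrt{n}\cdot o_\PP(n^{-1/2}) = o_\PP(1)$, so that the hypothesis on $\hat\theta_n$ guarantees $\hat\theta_n \in \hat{\Theta}_n$ with probability approaching one. By Theorem~\ref{TM:MSWD_argmin_CLT}(i), on an event of inner probability tending to one we have $\sqrt{n}(\hat\theta_n - \theta^\star) \in K(\GG_n^{(\sigma)}, \beta_n)$, where $\beta_n \downarrow 0$ and $\GG_n^{(\sigma)}$ is the smooth empirical process.

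Next I would invoke Theorem~\ref{TM:MSWD_argmin_CLT}(ii), namely $K(\GG_n^{(\sigma)},\beta_n) \stackrel{d}{\to} K(G_P^{(\sigma)},0)$ as $\mathfrak{K}$-valued random variables in the Hausdorff topology. The key observation is that when $\argmin_{t}\lip{G_P^{(\sigma)}-\ip{t,D^{(\sigma)}}}$ is a.s.\ a single point $t^\star$, the limiting random compact convex set $K(G_P^{(\sigma)},0)$ is a.s.\ a singleton $\{t^\star\}$. Hence convergence of $K(\GG_n^{(\sigma)},\beta_n)$ to $\{t^\star\}$ in the Hausdorff metric forces the diameters $\diam K(\GG_n^{(\sigma)},\beta_n) \to 0$ in probability, and any measurable selection from $K(\GG_n^{(\sigma)},\beta_n)$ — in particular $\sqrt{n}(\hat\theta_n-\theta^\star)$ on the high-probability event above — converges in distribution to $t^\star$. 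Concretely, I would use that the map $K \mapsto$ (any point of $K$) composed with $\diam$ controls the gap: for any $t_n \in K(\GG_n^{(\sigma)},\beta_n)$ one has $\|t_n - \pi(K(\GG_n^{(\sigma)},\beta_n))\| \le \diam K(\GG_n^{(\sigma)},\beta_n)$, where $\pi$ is a fixed measurable selection; since both $\pi(K(\GG_n^{(\sigma)},\beta_n)) \stackrel{d}{\to} t^\star$ (continuous mapping, as $\pi$ is continuous at singletons) and $\diam K(\GG_n^{(\sigma)},\beta_n)\stackrel{d}{\to} 0$, Slutsky's lemma yields $t_n \stackrel{d}{\to} t^\star = \argmin_{t}\lip{G_P^{(\sigma)}-\ip{t,D^{(\sigma)}}}$.

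Finally I would reconcile the inner-probability statement with ordinary convergence in distribution: since the event $E_n := \{\hat\Theta_n \subset \theta^\star + n^{-1/2}K(\GG_n^{(\sigma)},\beta_n)\}\cap\{\hat\theta_n\in\hat\Theta_n\}$ has inner probability tending to one, one can replace $\sqrt{n}(\hat\theta_n-\theta^\star)$ by a measurable random vector agreeing with it on $E_n$ and lying in $K(\GG_n^{(\sigma)},\beta_n)$ otherwise, without affecting the weak limit (a standard argument, cf.\ \cite[Section~7.2]{pollard1980minimum}). Applying the previous paragraph to this modified sequence gives the claim. The main obstacle I anticipate is the measurability bookkeeping around inner probability — constructing the measurable selection/modification cleanly and checking that passing to it is harmless — rather than any genuinely new analytic difficulty; the distributional convergence itself is essentially forced once uniqueness collapses the limit level set to a point. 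All remaining verifications (that $\lip{G_P^{(\sigma)}-\ip{t,D^{(\sigma)}}}\to\infty$ as $\|t\|\to\infty$, hence $\argmin$ is nonempty, and that the selection map is continuous at singletons in $\mathfrak{K}$) are routine given nonsingularity of $D^{(\sigma)}$.
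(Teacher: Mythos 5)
Your proposal is correct, but it is a genuinely different route from the one the paper takes. The paper proves Corollary~\ref{cor:unique_solution} directly from the ingredients of the proof of Theorem~\ref{TM:MSWD_inf_CLT}: it first gets rate-tightness $\sqrt{n}\|\hat{\theta}_n-\theta^\star\|=O_{\PP}(1)$ from the lower bound \eqref{eq: triangle}, then uses the approximation \eqref{eq: approximation} and \eqref{EQ:SMWE_final_proxy} to show $H_n(\hat{t}_n)\le\inf_{t}H_n(t)+o_{\PP}(1)$ for the convex processes $H_n(t)=\lip{\GG_n^{(\sigma)}-\ip{t,D^{(\sigma)}}}$, and finally invokes the convexity argmin lemma (Lemma~\ref{lem: convexity}, in the spirit of Pollard/Hjort--Pollard/Kato): finite-dimensional convergence of $H_n$ to $H$ plus a.s.\ uniqueness of $\argmin H$ already yields $\hat{t}_n\stackrel{d}{\to}\argmin_t H(t)$, with no set-valued machinery. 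Your argument instead deduces the corollary from Theorem~\ref{TM:MSWD_argmin_CLT}: under uniqueness the limit set $K(G_P^{(\sigma)},0)$ is a.s.\ a singleton, Hausdorff convergence forces $\diam K(\GG_n^{(\sigma)},\beta_n)\to 0$ in probability (continuity of $\diam$ plus convergence in distribution to the constant $0$), and a measurable selection --- note any selection is automatically continuous at singletons, since $d_{\mathsf H}(K,\{x\})<\epsilon$ bounds every point of $K$ within $\epsilon$ of $x$, so Pollard's Lemma~7.1 measurability is all you need --- combined with Slutsky and a measurable modification on the inner-probability-one event gives the claim. This is precisely the alternative the paper flags in its remark after the proof (``the result of Theorem~\ref{TM:MSWD_argmin_CLT} combined with the argument given at the end of p.~67 in \cite{pollard1980minimum}''), while describing its own proof as more direct. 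The trade-off: your route reuses the already-stated set convergence and needs only soft topological facts about $\mathfrak{K}$, but must carry the inner-probability/selection bookkeeping you anticipate; the paper's route bypasses Theorem~\ref{TM:MSWD_argmin_CLT} and all measurability-of-events issues by exploiting convexity, at the cost of re-deriving the $O_{\PP}(1)$ rate and the $o_{\PP}(1)$ near-minimization of $H_n$ from \eqref{eq: triangle}--\eqref{EQ:SMWE_final_proxy}. Both are sound; just make sure in your write-up to note that nonsingularity of $D^{(\sigma)}$ gives coercivity, hence nonemptiness and compactness of the limiting argmin set, so that $K(G_P^{(\sigma)},0)\in\mathfrak{K}$ a.s.
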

Corollary \ref{cor:unique_solution} is proven in Supplement \ref{subsec: unique} using weak convergence of argmin of random convex maps. 

\paragraph{Generalization discussion.} Theorem \ref{TM:MSWD_argmin_CLT} and Corollary \ref{cor:unique_solution} imply a generalization bound for MSWE-based generative model. Focusing on the corollary, $\hat{\theta}_{n}$ is an approximate optimizer (e.g., obtained via some suboptimal gradient-based optimization) of the empirical MSWE problem $\gwass (P_{n},Q_{\theta})$. The goal is to obtain a $\hat{\theta}_{n}$ that approximates as best as possible the minimizer $\theta^\star$ of the population loss $\gwass (P,Q_{\theta})$. Corollary \ref{cor:unique_solution} thus states that the MSWE $\hat{\theta}_{n}$ converges to the true optimum $\theta^\star$ (in expectation, in distribution, and with high probability) at a dimension-free rate of $n^{-1/2}$, which corresponds to the notion of GAN generalization from \cite{arora2017generalization,zhang2017discrimination}. In fact, by Eq. (10) from \cite{zhang2017discrimination}, we further have that $\gwass (P,Q_{\hat \theta_n})-\inf_{\theta\in\Theta}\gwass (P,Q_{\theta})\lesssim n^{-1/2}$, with high probability.



\section{Empirical Results}

We provide experiments on synthetic data validating our theory. We start with a one-dimensional setting since then an exact expression for the SWD is available (as the $L^1$ distance between cumulative distribution functions \cite{vallender1974calculation}). 
Afterwards, higher dimensional problems are explored using an estimator based on the neural network (NN) parameterized KR dual form of $\wass$.

\begin{figure}[!t]
	\centering
		\includegraphics[width=\textwidth]{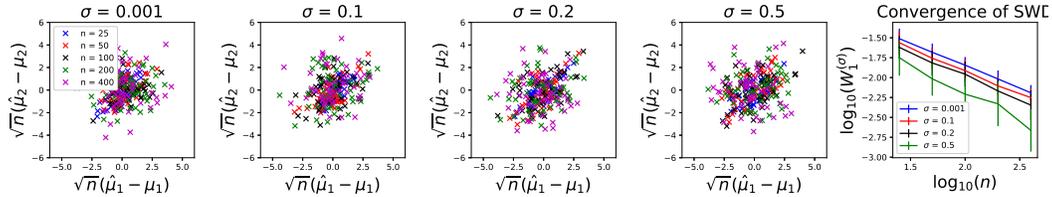}
		\vspace{-3mm}
    \caption{One-dimensional limiting distributions for the two mean parameters of the mixture $P=0.5\cN(\mu_1,1)+0.5\cN(\mu_2,1)$, for $\mu_1=0$ and $\mu_2=1$. Also shown on a log-log scale (with error bars) is the SWD convergence as a function of $n$.}
    \label{fig:OneDim}
\end{figure}

\begin{figure}[!t]
    \centering
	\begin{subfigure}[b]{1.0\linewidth}
    		\includegraphics[width=\textwidth]{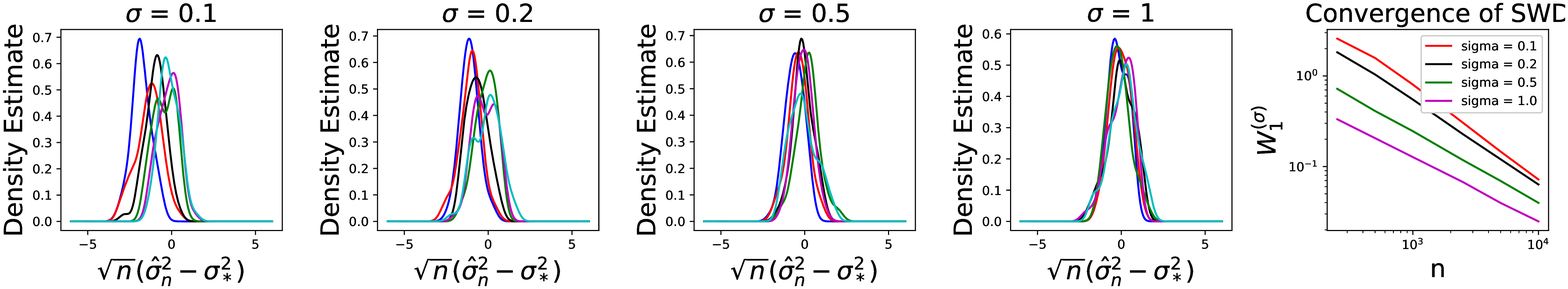}
    		\vspace{-6mm}
		\caption{$d=5$}
		\vspace{-1mm}
		\label{fig:Gauss5}
	\end{subfigure}\\
	\centering
	\begin{subfigure}[b]{1.0\linewidth}
		\includegraphics[width=\textwidth]{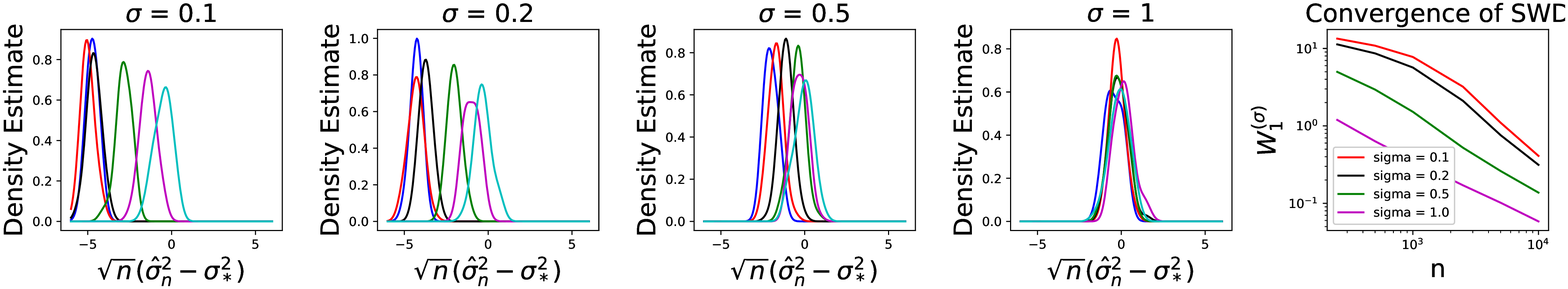}
		\vspace{-6mm}
		\caption{$d=10$}
		\label{fig:Gauss10}
	\end{subfigure}
    \caption{Empirical limiting distributions for the variance parameter of an MSWE-based generative model fitted to $P=\cN_1$. Also shown as a log-log plot is the SWD convergence as a function of $n$.}
    \vspace{-2mm}
    \label{fig:Gaussian}
\end{figure}

Fig. \ref{fig:OneDim} shows results for fitting two-parameter generative models in one dimension for a Gaussian mixture (parameterized by the two means, one from each mode). $\sqrt{n}$-scaled scatter plots of the estimation error are shown for various $\sigma$ and $n$ values, each formed from 50 estimation trials. Convergence of the corresponding SWD losses is shown on the right. Note that the point clouds closely overlap in each plot even as $n$ increases, implying that indeed a limiting distribution is emerging, as predicted by the theory. In particular, the spread of the scatter plots does not increase despite a 16-fold increase in $n$, and the SWD loss converges at approximately an $n^{-1/2}$ rate. Supplement \ref{supp:exp} gives additional results for a single Gaussian (parameterized by mean and~variance).

In higher dimensions, the MSWE is computed as follows. We first draw $n$ samples from $P$ and obtain the empirical measure $P_n$. Sampling from $P_n$ and $\Gauss$ and adding the obtained values produces samples from $P_n\ast\Gauss$. Applying similar steps to $Q_\theta$, we may  compute $\gwass(P_n,Q_\theta)$ by applying standard $\wass$ estimators to samples from the convolved measures. We use the NN-based estimator for WGAN-GP discriminator from  \cite{gulrajani2017improved}. As a side note, we believe that more effective estimators that are tailored for the SWD structure are possible, but leave this exploration to future work.

Fig. \ref{fig:Gaussian} shows MSWE results in dimensions $5$ and $10$. The target distribution is a multivariate standard Gaussian $P=\cN_{\sigma_\star}$ for $\sigma_\star=1$. The model $Q_\theta$ is also an isotropic Gaussian, with a single (variance) parameter. The WGAN-GP discriminator has 3 hidden layers with 512 hidden units each. The resulting distribution of $\sqrt{n}(\hat{\sigma}_n^2-\sigma_\star^2)$ is shown for various values of $\sigma$ (the SWD smoothing parameter) and number of samples $n$. These distributions are computed using a kernel density estimate on 50 random trials. As seen in the figure, the distribution of $\sqrt{n}(\hat{\sigma}_n^2-\sigma_\star^2)$ converges to a clear limit as $n$ increases, for all $\sigma>0$ values (although convergence is slower for smaller $\sigma$). When $\sigma$ is smaller, i.e., closer to the classic $\wass$ case, convergence is less pronounced, especially in higher dimensions. Finally, note that as predicted by our theory, the MSWE convergence rate is $n^{-1/2}$.


\begin{wrapfigure}{r}{0.5\textwidth}
    \hspace{-2mm}\includegraphics[width=\linewidth]{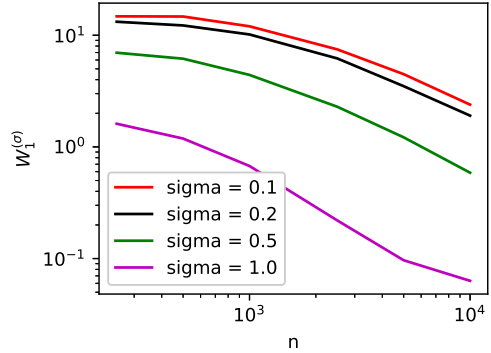}
    \caption{Convergence for fitting an NN generative model to a multivariate Gaussian mixture. }
    \label{fig:HighDim}
\end{wrapfigure}

Lastly, we consider a more complex target $P$ and parameterize $Q_\theta$ via a three-layer neural network with 256 hidden units per layer. Note that this corresponds to the generator in a GAN setup, where the parameters $\theta$ of the neural network are learned so that $Q_\theta$ matches a target distribution. We combine this with our neural SWD-based discriminator, effectively creating an SWD GAN that we train in a way similar to WGAN-GP. Setting $d = 10$, we take $P$ as a $2^d$-mode Gaussian mixture formed by equal-weighted isotropic Gaussians (with variance parameter 1) centered at each corner of the $[-1,1]^d$ hypercube. 
As there are too many parameters to visualize the limiting distribution, Fig. \ref{fig:HighDim} instead shows the SWD convergence versus the number of samples $n$. As predicted, for $\sigma > 0$, the SWD asymptotically converges as approximately $n^{-1/2}$, though for smaller $\sigma$ this rate only kicks in for higher values of $n$. This two-phase behavior is expected, since when $n$ and $\sigma$ are small, and $d$ is large, the Gaussian convolution in the SWD is unlikely to result in smoothing different samples together.


\section{Summary and concluding remarks}\label{SEC:summary}

We studied MDE with $\gwass$ as the figure of merit. Measurability, strong consistency and limit distributions for MSWE in arbitrary dimensions were established. The characterization of high-dimensional distributional limits stands in sharp contrast with the classic $\wass$ MDE, where such a result is known only for $d=1$ \cite{bernton2019parameter}. In particular, our results imply a uniform $n^{-1/2}$ convergence rate for MSWE for all $d$, highlighting the virtue of $\gwass$ for high-dimensional generative modeling. Our ability to treat MSWE for arbitrary $d$ relied on a novel limit distribution result for the empirical SWD. Under a polynomial moment condition on $P$ we show that $\sqrt{n}\gwass(P_n,P)$ converges in distribution to the supremum of a tight Gaussian process. This again contrasts the $\wass$ case, where a limit distribution is known only in $d=1$. We have also established consistency of the bootstrap to enable evaluation of the limit distribution in practice.

This work focuses of statistical aspects of generative modeling with the SWE. A major goal going forward is to develop efficient algorithms for computing $\gwass$, that are tailored to exploit the Gaussian convolution structure. We view the Monte Carlo algorithm employed herein merely as a placeholder. Gaussian smoothing significantly speeds up $\wass$ empirical convergence rates from $n^{-1/d}$ to $\sigma^{-d/2}n^{-1/2}$. While the latter is optimal in $n$, the exponential dependence of the prefactor on $d$ calls for further exploration. We aim to relax this dependence under the manifold hypothesis, showing that the actual dependence is on the intrinsic dimension, and not the ambient one. Additional directions include an analysis for when $\sigma\downarrow0$ is at a sufficiently slow rate (as a proxy for $\wass$). This is both theoretically challenging (calls for a finer analysis than the one presented herein) and practically relevant, as noise annealing is often used to stabilize training. SWDs of higher orders are also of interest.






\section*{Funding Disclosure}
The work of Z. Goldfeld was supported by the  National Science Foundation Grant CCF-1947801 and the 2020 
IBM Faculty Award. The work of K. Kato was supported by the National Science Foundation Grants DMS-1952306 and DMS-2014636.

\newpage



\section*{Appendices}
\appendix


\section{Additional result and proofs for Section \ref{sec: limit}}

\subsection{Concentration inequalities for $\gwass(P_n,P)$}\label{subsupp:concentation}
We consider a quantitative concentration inequality for $\gwass (P_{n},P)$. For $\alpha > 0$, let $\| \xi \|_{\psi_{\alpha}} := \inf \{ C > 0 : \E[e^{(|\xi|/C)^{\alpha}}] \le 2 \}$ be the Orlitz $\psi_{\alpha}$-norm for a real-valued random variable $\xi$ (if $\alpha \in (0,1)$, then $\| \cdot \|_{\psi_{\alpha}}$ is a quasi-norm). In Section \ref{SUBSEC: concentation} we prove the following.

\begin{corollary}[Concentration inequality]
\label{cor: concentration}
Assume $\E[\gwass (P_{n},P)]<\infty$. The following hold:
\begin{enumerate}[wide, labelwidth=!, labelindent=0pt]
\item[(i)] If $P$ is compactly supported with support $\mathcal{X}$, then 
\[
\PP \left (  \gwass (P_{n},P) \ge \E\big[\gwass (P_{n},P)\big]+ t \right ) \le e^{-\frac{nt^{2}}{\diam (\mathcal{X})^{2}}},\quad \forall t>0.
\]
\item[(ii)] If $\| \| X \| \|_{\psi_{\alpha}} < \infty$ for some $\alpha \in (0,1]$, where $X \sim P$, then for any $\eta > 0$, there exists a constant $C=C_{\eta,\alpha}$ depending only on $\eta,\alpha$ such that
\[
\begin{split}
&\PP \left (  \gwass (P_{n},P) \ge (1+\eta) \E\big[\gwass (P_{n},P)\big]+ t \right )\le \exp \left (-\frac{nt^{2}}{C\big(P\|x\|^{2}+\sigma^{2}d\big)} \right )\\
&\quad\quad\quad\quad\quad\quad\quad\quad\quad\quad+ 3 \exp \left ( -\left ( \frac{nt}{C \left(\big \|  \max_{1 \le i \le n} \| X_{i} \| \big \|_{\psi_{\alpha}}+\sigma \sqrt{d}\right)} \right)^{\alpha}\right ),\quad\forall t>0.
\end{split}
\]
\item[(iii)] If $P\|x\|^{q} < \infty$ for some $q \in [1,\infty)$, then 
for any $\eta > 0$, there exists a constant $C=C_{\eta,q}$ depending only on $\eta,q$ such that
\[
\begin{split}
&\PP \left (  \gwass (P_{n},P) \ge (1+\eta) \E\big[\gwass (P_{n},P)\big]+ t \right )\le \exp \left (-\frac{nt^{2}}{C\big(P\|x\|^{2}+\sigma^{2}d\big)} \right )\\
&\quad\quad\quad\quad\quad\quad\quad\quad\quad\quad\quad\quad\quad\quad\quad\quad\quad+ \frac{C\Big(\E\big [ \max_{1 \le i \le n} \| X_{i} \|^{q} \big ] + \sigma^{q} d^{q/2}\Big)}{n^{q}t^{q}},\quad\forall t>0.
\end{split}
\]
\end{enumerate}
\end{corollary}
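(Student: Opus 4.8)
All three bounds rest on the same reduction. By Kantorovich--Rubinstein duality, the identity $\gwass(P,Q)=\lip{P^{(\sigma)}-Q^{(\sigma)}}$ from Supplement~\ref{SUBSEC:CLT for W1 proof}, and symmetry of $\mathsf{Lip}_{1,0}$ under $f\mapsto -f$, we may write
\[
n\,\gwass(P_{n},P)=\sup_{f\in\mathsf{Lip}_{1,0}}\sum_{i=1}^{n}\big((f\ast\gauss)(X_{i})-P(f\ast\gauss)\big)=:Z .
\]
Thus $Z$ is the supremum of an empirical process indexed by $\cF_{\sigma,d}=\{f\ast\gauss:f\in\mathsf{Lip}_{1,0}\}$, which (exactly as in the proof of Theorem~\ref{thm: CLT for W1}) may be taken to be a pointwise-separable/countable class for measurability purposes, and which admits the envelope $F(x)=\|x\|+\sigma\sqrt d$ since $|(f\ast\gauss)(x)|\le\|x\|+\sigma\sqrt d$. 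Because $\E[Z]=n\,\E[\gwass(P_{n},P)]$, every claimed inequality follows by applying a concentration bound to $Z$ and then substituting $s=nt$.

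\textbf{Part (i).} Each $f\ast\gauss$ is $1$-Lipschitz (the convolution of a $1$-Lipschitz function with a probability density), so its oscillation over $\mathcal{X}=\supp(P)$ is at most $\diam(\mathcal{X})$. Hence, viewing $\gwass(P_{n},P)$ as a measurable function of $(X_{1},\dots,X_{n})$, changing a single coordinate to another point of $\mathcal{X}$ alters $\frac1n\sum_i (f\ast\gauss)(X_i)$, and therefore the supremum over $f$, by at most $\diam(\mathcal{X})/n$. The one-sided bounded-differences (McDiarmid) inequality then yields $\PP(\gwass(P_{n},P)\ge\E[\gwass(P_{n},P)]+t)\le\exp(-2nt^{2}/\diam(\mathcal{X})^{2})$, which is (slightly) stronger than the stated bound.

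\textbf{Parts (ii) and (iii).} Now the increments of $Z$ are unbounded, so in place of McDiarmid I would invoke an Adamczak--Talagrand deviation inequality for suprema of unbounded empirical processes (see, e.g., \cite{GiNi2016}): for every $\eta>0$ there is $C=C_{\eta,\alpha}$ (resp.\ $C_{\eta,q}$) such that, with $\sigma_{\cF}^{2}:=\sup_{f\in\mathsf{Lip}_{1,0}}\Var_{P}(f\ast\gauss)$,
\[
\PP\big(Z\ge(1+\eta)\E[Z]+s\big)\le\exp\!\Big(-\tfrac{s^{2}}{C\,n\,\sigma_{\cF}^{2}}\Big)+3\exp\!\Big(-\big(s/(C\,\big\|\max_{1\le i\le n}\|X_{i}\|\big\|_{\psi_{\alpha}}+C\sigma\sqrt d)\big)^{\alpha}\Big)
\]
in the $\psi_{\alpha}$-envelope case, and with the last summand replaced by $C\,\E[\max_{1\le i\le n}F(X_{i})^{q}]/s^{q}$ in the polynomial-moment case. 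Two quantities then need to be controlled. First the ``weak variance'': since $F\ge0$ and $|f\ast\gauss|\le F$, $\sigma_{\cF}^{2}\le\sup_{f}P\big((f\ast\gauss)^{2}\big)\le P(F^{2})\le 2\big(P\|x\|^{2}+\sigma^{2}d\big)$ (and when $P\|x\|^{2}=\infty$ the first term of the target bound is $\ge1$, so there is nothing to prove). Second the envelope maximum: by the triangle inequality for the $\psi_{\alpha}$ quasi-norm, $\big\|\max_{i}F(X_{i})\big\|_{\psi_{\alpha}}\le\big\|\max_{i}\|X_{i}\|\big\|_{\psi_{\alpha}}+\sigma\sqrt d\,\|1\|_{\psi_{\alpha}}\lesssim_{\alpha}\big\|\max_{i}\|X_{i}\|\big\|_{\psi_{\alpha}}+\sigma\sqrt d$, while $(a+b)^{q}\le 2^{q-1}(a^{q}+b^{q})$ gives $\E[\max_{i}F(X_{i})^{q}]\lesssim_{q}\E[\max_{i}\|X_{i}\|^{q}]+\sigma^{q}d^{q/2}$. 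Substituting $s=nt$ and absorbing numerical factors into $C$ produces parts (ii) and (iii).

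\textbf{Main obstacle.} The delicate point is the clean invocation of the unbounded-empirical-process inequality: one must verify that $\cF_{\sigma,d}$ is suitably separable/measurable so that $Z$ is a bona fide random variable within the scope of the Adamczak-type bound (handled exactly as for Theorem~\ref{thm: CLT for W1}), select the one-sided variant whose tail structure matches the three stated forms, and track how the Gaussian scale $\sigma\sqrt d$ enters through the envelope into the final constants. The accompanying variance and moment estimates are then routine.
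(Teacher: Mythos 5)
Your proposal is correct and follows essentially the same route as the paper: part (i) is the bounded-differences/McDiarmid bound (which is exactly what the cited Corollary 1 of \cite{Goldfeld2020GOT} provides), and parts (ii)--(iii) apply Adamczak-type tail inequalities for suprema of unbounded empirical processes to the class $\check{\calF}$ with envelope $\check F(x)=\|x\|+\sigma\sqrt d$, which is precisely the paper's (omitted) argument. Your variance bound $\sup_f \Var_P(f\ast\gauss)\le 2(P\|x\|^2+\sigma^2 d)$ and the envelope-maximum estimates correctly supply the details the paper leaves out.
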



\subsection{Proof of Theorem \ref{thm: CLT for W1}}\label{SUBSEC:CLT for W1 proof}
Recall that $\varphi_{\sigma}$ is the density function of $\cN(0,\sigma^2 \mathrm{I}_d)$, i.e., $\varphi_{\sigma}(x) = (2\pi \sigma^{2})^{-d/2} e^{-\|x\|^{2}/(2\sigma^{2})}$ for $x \in \R^{d}$. 
Noting that the measure $P_{n} \ast \Gauss$ has density 
\[
x \mapsto \frac{1}{n} \sum_{i=1}^{n} \gauss (x-X_{i}) = \frac{1}{n} \sum_{i=1}^{n} \gauss(X_{i}-x),
\]
we arrive at the expression
\begin{equation}
\gwass (P_{n},P) = \sup_{f \in \mathsf{Lip}_{1}} \left [ \frac{1}{n} \sum_{i=1}^{n} f\ast\gauss (X_{i}) - Pf\ast\gauss \right ]. 
\label{eq: smooth Wasserstein}
\end{equation}
The RHS of \eqref{eq: smooth Wasserstein} does not change even if we replace $f$ by $f-f(x^{\star})$ for any fixed point $x^{\star}$ (as $\int_{\R^{d}} \gauss(x^{\star}-y)dy = 1$).
Thus, the problem boils down to showing that the function class
\[
\check{\calF} := \check{\calF}_{\sigma,d} := \left\{ f\ast\gauss : f \in \mathsf{Lip}_{1,0} \right\} \quad \text{with} \ \mathsf{Lip}_{1,0} := \{ f \in \mathsf{Lip}_{1} : f(0) = 0 \}
\]
is $P$-Donsker. 
Pick any $f \in \mathsf{Lip}_{1,0}$, and consider 
\[
f_{\sigma} (x) := f \ast \gauss (x) = \int f(y) \gauss (x-y)  \dd y.
\]
We see that, since $|f(y)| \le |f(0)| + \|y\| =\| y \|$, 
\[
\begin{split}
|f_{\sigma}(x)| &\le \int \| y \| \gauss(x-y) \dd y \le \int (\| x \| + \| x-y \|) \gauss (x-y) \dd y \\
&\le \| x  \| + \int \| y \| \gauss (y) \dd y \le \| x \| + \left ( \int_{\R^{d}} \| y \|^{2} \gauss(y) \dd y \right )^{1/2} \\
& = \| x \| + \sigma \sqrt{d}. 
\end{split}
\]
In general, for a vector $k = (k_1,\dots,k_d)$ of $d$ nonnegative integers, define the differential operator
\[
D^{k} = \frac{\partial^{|k|}}{\partial x_{1}^{k_{1}} \cdots \partial x_{d}^{k_{d}}},
\]
with $|k| = \sum_{i=1}^{d} k_{i}$. We next give a uniform bound on the derivatives of $f_\sigma$, for any $f\in\mathsf{Lip}_1$.

\begin{lemma}[Uniform bound on derivatives]
\label{lem: derivative}
For any $f \in \mathsf{Lip}_{1}$ and any nonzero multiindex $k=(k_{1},\dots,k_{d})$, we have 
\[
\big|D^{k} f_{\sigma}(x) \big| \le  \sigma^{-|k|+1} \sqrt{(|k|-1)!},\quad \forall x\in\RR^d.
\]

\end{lemma}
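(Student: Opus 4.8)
The plan is to exploit that a $1$-Lipschitz $f$ has (a.e.) bounded first-order partials, peel off exactly one derivative onto $f$, and then bound the $L^{1}$-norm of the remaining Gaussian derivative via Hermite polynomials. Concretely, fix a nonzero multiindex $k$ and write $k = k' + e_{j}$ for some coordinate $j$ with $k_{j}\geq 1$, so that $|k'| = |k|-1$. Since $\varphi_{\sigma}\in C^{\infty}$ with all derivatives of super-polynomial decay and $f$ has at most linear growth ($|f(y)|\le |f(0)|+\|y\|$), differentiation under the integral sign is justified, $f_{\sigma}=f\ast\varphi_{\sigma}$ is smooth, and one has $\partial_{j}f_{\sigma} = \partial_{j}(f\ast\varphi_{\sigma}) = (\partial_{j}f)\ast\varphi_{\sigma}$, where $\partial_{j}f$ is the weak (equivalently, a.e.) partial derivative, which satisfies $\|\partial_{j}f\|_{L^{\infty}}\le\|\nabla f\|_{L^{\infty}}\le 1$ by Rademacher's theorem. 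Applying the pure ``Gaussian-side'' operator $D^{k'}$ and passing it onto $\varphi_{\sigma}$ gives $D^{k}f_{\sigma} = D^{k'}\big((\partial_{j}f)\ast\varphi_{\sigma}\big) = (\partial_{j}f)\ast\big(D^{k'}\varphi_{\sigma}\big)$, whence $|D^{k}f_{\sigma}(x)|\le \|\partial_{j}f\|_{L^{\infty}}\,\big\|D^{k'}\varphi_{\sigma}\big\|_{L^{1}(\R^{d})}\le\big\|D^{k'}\varphi_{\sigma}\big\|_{L^{1}(\R^{d})}$ for every $x\in\R^{d}$.

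Next I would evaluate $\|D^{k'}\varphi_{\sigma}\|_{L^{1}}$ by tensorization. Writing $\varphi_{\sigma}(x)=\prod_{i=1}^{d}\phi_{\sigma}(x_{i})$ with $\phi_{\sigma}$ the one-dimensional $\cN(0,\sigma^{2})$ density, we get $D^{k'}\varphi_{\sigma}(x)=\prod_{i=1}^{d}\phi_{\sigma}^{(k'_{i})}(x_{i})$, so $\|D^{k'}\varphi_{\sigma}\|_{L^{1}(\R^{d})}=\prod_{i=1}^{d}\|\phi_{\sigma}^{(k'_{i})}\|_{L^{1}(\R)}$. For the one-dimensional factors, recall $\phi_{\sigma}^{(\ell)}(t)=\sigma^{-\ell}(-1)^{\ell}He_{\ell}(t/\sigma)\,\phi_{\sigma}(t)$, where $He_{\ell}$ is the probabilists' Hermite polynomial; substituting $u=t/\sigma$ yields $\|\phi_{\sigma}^{(\ell)}\|_{L^{1}(\R)}=\sigma^{-\ell}\,\E\big|He_{\ell}(Z)\big|$ with $Z\sim\cN(0,1)$. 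By Cauchy--Schwarz and the orthogonality relation $\E[He_{\ell}(Z)^{2}]=\ell!$, this is at most $\sigma^{-\ell}\sqrt{\ell!}$.

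Combining the two steps, $\|D^{k'}\varphi_{\sigma}\|_{L^{1}}\le\prod_{i=1}^{d}\sigma^{-k'_{i}}\sqrt{k'_{i}!}=\sigma^{-|k'|}\sqrt{\prod_{i}k'_{i}!}$, and since the multinomial coefficient $|k'|!\big/\prod_{i}k'_{i}!$ is a positive integer, $\prod_{i}k'_{i}!\le|k'|!$, giving $\|D^{k'}\varphi_{\sigma}\|_{L^{1}}\le\sigma^{-|k'|}\sqrt{|k'|!}=\sigma^{-|k|+1}\sqrt{(|k|-1)!}$. Chaining this with the bound $|D^{k}f_{\sigma}(x)|\le\|D^{k'}\varphi_{\sigma}\|_{L^{1}}$ from the first step yields the claim.

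There is no deep obstacle here; the one genuinely important idea is to transfer a single derivative onto $f$ before estimating. Bounding instead $|D^{k}f_{\sigma}|\le\|D^{k}\varphi_{\sigma}\|_{L^{1}}\asymp\sigma^{-|k|}\sqrt{|k|!}$ would lose a power of $\sigma$ and produce a too-weak exponent --- the Lipschitz hypothesis is precisely what buys the reduction from $|k|$ to $|k|-1$. The residual work is bookkeeping: justifying differentiation under the integral (linear growth of $f$ against rapid decay of $\varphi_{\sigma}$'s derivatives), the Hermite identity together with the $L^{2}$-normalization $\E[He_{\ell}(Z)^{2}]=\ell!$, and the elementary inequality $\prod_{i}k'_{i}!\le|k'|!$.
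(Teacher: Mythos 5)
Your proof is correct and follows essentially the same route as the paper: both rest on the Hermite representation of Gaussian derivatives, the bound $\E\big[|H_{\ell}(Z)|\big]\le\sqrt{\ell!}$ via Cauchy--Schwarz, the inequality $\prod_{i}k_i'!\le |k'|!$, and the use of the Lipschitz hypothesis to save exactly one power of $\sigma^{-1}$. The only minor difference is bookkeeping: you transfer one derivative onto $f$ via Rademacher's theorem and $\partial_j(f\ast\varphi_\sigma)=(\partial_j f)\ast\varphi_\sigma$, then bound $\|(\partial_j f)\ast D^{k'}\varphi_\sigma\|_{\infty}\le\|D^{k'}\varphi_\sigma\|_{L^1}$, whereas the paper never differentiates $f$ at all, instead showing from the same Hermite computation that $D^{k'}f_\sigma$ is Lipschitz with constant $\sigma^{-|k'|}\sqrt{|k'|!}$ and reading off the bound on $|D^{k}f_\sigma|=|\partial_j D^{k'}f_\sigma|$ from that Lipschitz constant.
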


\begin{proof}
Let $H_{m}(z)$ denote the Hermite polynomial of degree $m$ defined by 
\[
H_{m}(z) =(-1)^{m} e^{z^{2}/2} \left [  \frac{d^{m}}{dz^{m}} e^{-z^{2}/2}  \right ], \ m=0,1,\dots.
\]
Note that for $Z \sim \cN(0,1)$, $\E[H_{m}(Z)^{2}] = m!$. 

A straightforward computation shows that 
\[
D_{x}^{k} \gauss(x-y) = \gauss (x-y) \left [ \prod_{j=1}^{d} (-1)^{k_{j}} \sigma^{-k_{j}} H_{k_{j}} \big((x_{j}-y_{j})/\sigma\big)\right ]
\]
for any multiindex $k=(k_{1},\dots,k_{d})$, where $D_{x}$ means that the differential operator is applied to $x$. Hence, we have
\[
\begin{split}
D^{k} f_{\sigma}(x) &= \int f(y) \gauss (x-y) \left [ \prod_{j=1}^{d} (-1)^{k_{j}} \sigma^{-k_{j}} H_{k_{j}} \big ((x_{j}-y_{j})/\sigma \big)\right ] \dd y \\
&=\int f(x-\sigma y) \varphi_1 (y) \left [ \prod_{j=1}^{d} (-1)^{k_{j}} \sigma^{-k_{j}} H_{k_{j}} (y_{j})\right ] \dd y,
\end{split}
\]
so that, by $1$-Lipschitz continuity of $f$, 
\[
\left|D^{k} f_{\sigma}(x)-D^{k} f_{\sigma}(x')\right| \le \| x-x' \| \int \varphi_1 (y) \left [ \prod_{j=1}^{d} \sigma^{-k_{j}} \big|H_{k_{j}} (y_{j})\big| \right ] \dd y.
\]
Note that the integral on the RHS equals
\[
\prod_{j=1}^{d} \sigma^{-k_{j}} \E\big [ \big|H_{k_{j}}(Z)\big| \big]
\le  \prod_{j=1}^{d} \sigma^{-k_{j}} \sqrt{\E\left[\big|H_{k_{j}}(Z)\big|^{2}\right] }= \prod_{j=1}^{d} \sigma^{-k_{j}} \sqrt{k_{j}!} \le \sigma^{-|k|} \sqrt{|k|!},
\]
where $Z \sim \cN (0,1)$.
The conclusion of the lemma follows from induction on the size of $|k|$.
\end{proof}

We will use the following technical result. 

\begin{lemma}[Metric entropy bound for H\"{o}lder ball]
\label{lem: metric entropy}
Let $\mathcal{X}$ be a bounded convex subset of $\R^{d}$ with nonempty interior. For given $N \in \mathbb{N}$ and $M > 0$, let $C^{N}(\cX)$ be the set of continuous real functions on $\cX$ that are $N$-times differentiable on the interior of $\cX$, and consider the H\"{o}lder ball with smoothness $N$ and radius $M$
\[
C_{M}^{N}(\mathcal{X}) := \left\{ f \in C^{N}(\mathcal{X}) : \| f \|_{C^{N}(\mathcal{X})} \le M \right\}, 
\]
where $\| f \|_{C^{N}(\mathcal{X})} := \max_{0 \le |k| \le N} \sup_{x} | D^{k} f(x) |$ (the suprema are taken over the interior of $\mathcal{X}$).
Then, the metric entropy of $C_{M}^{N}(\mathcal{X})$ (w.r.t. the uniform norm $\| \cdot \|_{\infty}$) can be bounded as 
\[
\log N \left(\epsilon M,C_{M}^{N}(\mathcal{X}), \| \cdot \|_{\infty} \right) \lesssim_{d,N,\diam (\mathcal{X})} \epsilon^{-d/N}, \ 0 < \epsilon \le 1, 
\]
\end{lemma}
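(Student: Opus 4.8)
The plan is to build an explicit $\epsilon$-net out of piecewise Taylor polynomials, following the classical Kolmogorov--Tikhomirov scheme; the one subtle point is a telescoping (chaining) discretization of the derivative data along a grid, which is what keeps the log-cardinality at $\epsilon^{-d/N}$ rather than $\epsilon^{-d/N}\log(1/\epsilon)$. One could alternatively reduce to a cube by a Whitney-type extension and cite \cite[Corollary~2.7.2]{VaWe1996}, but the direct argument avoids extension theorems for a general convex $\mathcal{X}$.

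\textbf{Reductions and local Taylor approximation.} Since $f\in C_M^N(\mathcal{X})$ iff $f/M\in C_1^N(\mathcal{X})$ and the uniform norm scales linearly, $N(\epsilon M, C_M^N(\mathcal{X}),\|\cdot\|_\infty)=N(\epsilon, C_1^N(\mathcal{X}),\|\cdot\|_\infty)$, so it suffices to take $M=1$. Fix $\delta$ with $\delta^N\asymp_{d,N}\epsilon$ and overlay a cubic grid of side $\delta$; let $C_1,\dots,C_T$ be the grid cells meeting $\mathcal{X}$ and pick $x_i\in C_i\cap\mathcal{X}$. Because $\mathcal{X}$ is convex, the segment from $x_i$ to any $x\in C_i\cap\mathcal{X}$ stays in $\mathcal{X}$, so Taylor's theorem applies along it; and $\|f\|_{C^N(\mathcal{X})}\le1$ forces every $D^k f$ with $|k|=N-1$ to be $\sqrt{d}$-Lipschitz, whence the degree-$(N-1)$ Taylor polynomial $P_i^f$ of $f$ at $x_i$ obeys $\sup_{C_i\cap\mathcal{X}}|f-P_i^f|\lesssim_{d,N}\delta^{N}\lesssim\epsilon$. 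Likewise, for $|k|\le N-1$, the degree-$(N-1-|k|)$ Taylor polynomial of $D^k f$ at $x_j$ approximates $D^k f(x_i)$ to within $\lesssim_{d,N}\delta^{\,N-|k|}$ whenever the cells $C_i,C_j$ touch (so $|x_i-x_j|\lesssim\delta$).

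\textbf{Telescoping net.} Take a spanning tree of the adjacency graph of the occupied cells (connected, as $\mathcal{X}$ is), rooted at $x_1$. The codeword of $f$ records: the values $D^k f(x_1)$, $|k|\le N-1$, each rounded to a mesh $\asymp\epsilon$ on $[-1,1]$, giving $\lesssim_{d,N}(C/\epsilon)^{\binom{N-1+d}{d}}$ possibilities; and, at each non-root node $x_i$ with parent $x_j$, one rounded ``correction'' per $|k|\le N-1$, namely the rounding to mesh $\asymp\epsilon\,\delta^{-|k|}$ of the gap between $D^k f(x_i)$ and its value predicted by Taylor-expanding the already reconstructed derivatives at $x_j$. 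An induction down the tree shows the reconstructed values $\hat D^k_i$ stay within $\lesssim_{d,N}\epsilon\,\delta^{-|k|}$ of the true $D^k f(x_i)$, so the reconstructed cell polynomials $\hat P_i$ depend only on the codeword; on each $C_i$ one has $\sup_{C_i}|\hat P_i-P_i^f|\lesssim_{d,N}\sum_{|k|\le N-1}\epsilon\,\delta^{-|k|}(\sqrt d\,\delta)^{|k|}\lesssim_{d,N}\epsilon$, and combined with the remainder bound above, any two functions in $C_1^N(\mathcal{X})$ with the same codeword are $\lesssim_{d,N}\epsilon$-close in $\|\cdot\|_\infty$. Choosing one class member per realized codeword gives a net whose cardinality is at most the number of codewords.

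\textbf{Counting and the obstacle.} The crux is that each correction has only $O_{d,N}(1)$ admissible values: its range is $\lesssim_{d,N}\epsilon\,\delta^{-|k|}+\delta^{\,N-|k|}$ (Taylor remainder plus accumulated reconstruction error) while its mesh is $\asymp\epsilon\,\delta^{-|k|}$, and the balance $\delta^N\asymp\epsilon$ makes the ratio $\delta^{\,N-|k|}/(\epsilon\,\delta^{-|k|})=\delta^N/\epsilon$ of order one. Since $T\lesssim_{d,\diam(\mathcal{X})}\delta^{-d}\asymp\epsilon^{-d/N}$ for $0<\epsilon\le1$, the number of codewords is at most $(C/\epsilon)^{\binom{N-1+d}{d}}\cdot C_{d,N}^{\,T\binom{N-1+d}{d}}$, so $\log N(c\epsilon, C_1^N(\mathcal{X}),\|\cdot\|_\infty)\lesssim_{d,N}\log(1/\epsilon)+T\lesssim_{d,N,\diam(\mathcal{X})}\epsilon^{-d/N}$; absorbing $c$ into $\epsilon$ and undoing the $M$-scaling finishes the proof. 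The main difficulty is exactly this bookkeeping: a naive per-cell discretization of the derivative data costs an extra $\log(1/\epsilon)$ factor, so the telescoping across the spanning tree, with the matched mesh $\epsilon\,\delta^{-|k|}$ and scale $\delta\asymp\epsilon^{1/N}$, is what is needed to reach the stated rate.
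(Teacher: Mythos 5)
Your construction is correct, and it is worth noting how it relates to the paper: the paper does not prove this lemma at all, but simply cites Theorem 2.7.1 in \cite{VaWe1996}, whose proof (going back to Kolmogorov--Tikhomirov) is in substance the same argument you give — a grid of mesh $\delta\asymp\epsilon^{1/N}$, local Taylor polynomials, and the key observation that once the discretized derivative data at one grid point are fixed, each neighboring point admits only $O_{d,N}(1)$ choices, which is exactly your spanning-tree/telescoping bookkeeping and is indeed what keeps the count at $\epsilon^{-d/N}$ rather than $\epsilon^{-d/N}\log(1/\epsilon)$. Your counting is sound: the reconstruction error is reset to half the rounding mesh $\asymp\epsilon\,\delta^{-|k|}$ at every node (it does not accumulate down the tree), the true correction is bounded by the Taylor remainder $\lesssim\delta^{N-|k|}$ plus the parent's reconstruction error, and with $\delta^N\asymp\epsilon$ the ratio of range to mesh is $O_{d,N}(1)$; connectivity of the cell-adjacency graph follows from convexity (hence connectedness) of $\mathcal{X}$ as you say. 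The only point you gloss over is a boundary technicality: $f$ is only assumed $N$-times differentiable on the interior of $\mathcal{X}$, so if a center $x_i$ or a point $x\in C_i\cap\mathcal{X}$ lies on the boundary, Taylor's theorem does not literally apply along the segment. This is harmless — the derivatives of order $\le N-1$ are Lipschitz on the interior (their gradients are bounded by $\sqrt{d}M$), hence extend continuously to $\overline{\mathcal{X}}$, and all your remainder and increment bounds extend to $C_i\cap\mathcal{X}$ by continuity (or take the $x_i$ in the dense set $\mathrm{int}(\mathcal{X})$ and pass to limits) — but a sentence to that effect should be added. With that caveat, your self-contained proof is a valid replacement for the citation, and its advantage is precisely that it avoids invoking an external entropy theorem (or any Whitney-type extension to a cube) for a general bounded convex $\mathcal{X}$.
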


\begin{proof}[Lemma \ref{lem: metric entropy}]
See Theorem 2.7.1 in \cite{VaWe1996}.
\end{proof}

We are now in position to prove Theorem \ref{thm: CLT for W1}. 

\begin{proof}[Theorem \ref{thm: CLT for W1}] The proof applies Theorem 1.1 in \cite{vanderVaart1996} to the function class $\check{\calF} = \check{\calF}_{\sigma,d} = \{ f\ast\gauss : f \in \mathsf{Lip}_{1,0} \}$ to show that it is $P$-Donsker. We begin with noting that the function class $\check{\calF}$ has envelope $\check{F}(x):= \check{F}_{\sigma,d}(x) := \|x\| + \sigma \sqrt{d}$. By assumption, $P\check{F}^{2} < \infty$.

Next, for each $j$, consider the restriction of $\check{\calF}$ to $I_{j}$, denoted as $\check{\calF}_{j}= \{ f \mathds{1}_{I_{j}} : f \in \check{\calF}\}$.
To invoke \cite[Theorem 1.1]{vanderVaart1996}, we have to verify that each function class $\calF_{j}$ is $P$-Donsker and to bound each $\E[\| \GG_{n} \|_{\check{\calF}_{j}}]$ where $\GG_{n} := \sqrt{n}(P_{n}-P)$ and $\| \cdot \|_{\check{\calF}_{j}} = \sup_{f \in \check{\calF}_{j}} | \cdot |$. In view of Lemma \ref{lem: derivative},  $\check{\calF}_{j}$ can be regarded as a subset of $C_{M}^{N}(I_{j})$  with $N = \lfloor d/2 \rfloor + 1$ and $M_{j}' = \big ( \sup_{I_{j}}\|x\| + \sigma \sqrt{d} \big ) \bigvee \sigma^{-\lfloor d/2 \rfloor} \sqrt{\lfloor d/2 \rfloor!}$. 
Thus, by Lemma \ref{lem: metric entropy}, the $L^{2}(Q)$-metric entropy of $\check{\calF}_{j}$ for any probability measure $Q$ on $\R^{d}$ can be bounded as 
\[
\log N\big(\epsilon M_{j}' Q(I_{j})^{1/2}, \check{\calF}_{j}, L^{2}(Q) \big) \lesssim_{d,K} \epsilon^{-d/(\lfloor d/2 \rfloor + 1)}.
\]
The square root of the RHS is integrable (w.r.t. $\epsilon$) around $0$, so that $\calF_{j}$ is $P$-Donsker by Theorem 2.5.2 in \cite{VaWe1996}, and by Theorem 2.14.1 in \cite{VaWe1996}, we obtain 
\[
\E[\| \GG_{n} \|_{\check{\calF}_{j}}] \lesssim_{d,K} M_{j}' P(I_{j})^{1/2} \lesssim_{d} \sigma^{-\lfloor d/2 \rfloor}M_{j} P(I_{j})^{1/2}
\]
with $M_{j} = \sup_{I_{j}}\|x\|$. By assumption, the RHS is summable over $j$. 

By Theorem 1.1 in \cite{vanderVaart1996} we conclude that $\check{\calF}$ is $P$-Donsker, which implies that there exists a tight version of $P$-Brownian bridge process $G_{P}$ in $\ell^{\infty}(\check{\calF})$ such that $(\GG_{n}f)_{f \in \check{F}}$ converges weakly in $\ell^{\infty}(\check{\calF})$ to $G_{P}$. 
Finally, the continuous mapping theorem yields that
\[\sqrt{n} \gwass (P_{n},P) =\sup_{f \in \check{\calF}} \GG_{n} f \stackrel{d}{\to} \sup_{f \in \check{\calF}} G_{P}(f) = \sup_{f \in \mathsf{Lip}_{1,0}} G_{P}^{(\sigma)}(f),
\]
where $G_{P}^{(\sigma)} (f):=G_{P} (f \ast \gauss)$. By construction, the Gaussian process $(G_{P}^{(\sigma)}(f))_{f \in \mathsf{Lip}_{1,0}}$ is tight in $\ell^{\infty}(\mathsf{Lip}_{1,0})$. The moment bound  follows from summing up the moment bound for each $\check{\calF}_{j}$. This completes the proof. 
\end{proof}



\subsection{Proof of Corollary \ref{cor: bootstrap}}\label{SUPP: bootstrap proof}
We start with proving the following technical lemma. 

\begin{lemma}[Distribution of $L_P^{(\sigma)}$]
\label{lem: limit variable}
Assume the conditions of Theorem \ref{thm: CLT for W1} and that $P$ is not a point mass. Then the distribution of $L_P^{(\sigma)}$ is absolutely continuous with respect to (w.r.t.) Lebesgue measure and its density is positive and continuous on $(0,\infty)$ except for at most countably many points. 
\end{lemma}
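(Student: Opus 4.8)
}
The plan is to identify $L_P^{(\sigma)} = \sup_{f \in \mathsf{Lip}_{1,0}} G_{P}^{(\sigma)}(f)$ as the supremum of a separable, centered Gaussian process with almost surely bounded sample paths, and then to invoke the classical structure theorem for the distribution of the maximum of a Gaussian process (Tsirelson's theorem; see also the treatments by Lifshits and by Davydov--Lifshits--Smorodina). That theorem guarantees that if $\xi$ is such a supremum and is non-degenerate (i.e.\ not a.s.\ constant), then the law of $\xi$ is absolutely continuous with respect to Lebesgue measure, and its density vanishes off $[\ell_-, \ell_+]$, is strictly positive on $(\ell_-, \ell_+)$, possesses one-sided limits at every point, and is continuous outside an at most countable set, where $\ell_-$ and $\ell_+$ denote the essential infimum and essential supremum of $\xi$. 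The proof therefore reduces to two things: verifying the hypotheses for $L_P^{(\sigma)}$, and showing that $(\ell_-, \ell_+) = (0, \infty)$.

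Separability and boundedness are immediate from Theorem~\ref{thm: CLT for W1}: since $G_P^{(\sigma)}$ is tight in $\ell^\infty(\mathsf{Lip}_{1,0})$, the index set $\mathsf{Lip}_{1,0}$ is totally bounded for the intrinsic pseudometric $d_G(f,g) = \sqrt{\E[|G_P^{(\sigma)}(f) - G_P^{(\sigma)}(g)|^2]}$ and $G_P^{(\sigma)}$ has a.s.\ bounded, $d_G$-uniformly continuous paths (Section~\ref{sec: limit}); hence $L_P^{(\sigma)} = \sup_{f \in \mathcal{D}} G_P^{(\sigma)}(f)$ a.s.\ for any countable $d_G$-dense $\mathcal{D} \subset \mathsf{Lip}_{1,0}$, and $L_P^{(\sigma)} < \infty$ a.s. Since the zero function lies in $\mathsf{Lip}_{1,0}$ and $\Var\big(G_P^{(\sigma)}(0)\big) = \Cov_P(0,0) = 0$, we have $G_P^{(\sigma)}(0) = 0$ a.s., so $L_P^{(\sigma)} \ge 0$ a.s. For non-degeneracy I would use the hypothesis that $P$ is not a point mass together with the standing assumption $P\|x\|^2 < \infty$: the covariance matrix of $P$ is then nonzero, so there is a unit vector $v$ with $\Var_P(v\cdot X) > 0$ for $X \sim P$; the linear map $f_v(x) := v\cdot x$ belongs to $\mathsf{Lip}_{1,0}$, and because $\gauss$ is centered one has $f_v \ast \gauss = f_v$, whence $\Var\big(G_P^{(\sigma)}(f_v)\big) = \Var_P(v\cdot X) > 0$. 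As $L_P^{(\sigma)} \ge G_P^{(\sigma)}(f_v)$ and the right-hand side is an unbounded Gaussian, $\PP\big(L_P^{(\sigma)} > t\big) > 0$ for every $t$, which simultaneously shows $L_P^{(\sigma)}$ is not a.s.\ constant and $\ell_+ = \infty$. For the lower endpoint, note $0 \le L_P^{(\sigma)} \le \sup_{f \in \mathsf{Lip}_{1,0}} |G_P^{(\sigma)}(f)|$, and the Gaussian small-ball inequality (the mean $0$ belongs to the support of the law of the tight Gaussian element $G_P^{(\sigma)}$) gives $\PP\big(\sup_{f}|G_P^{(\sigma)}(f)| < \epsilon\big) > 0$ for every $\epsilon > 0$; hence $\PP\big(L_P^{(\sigma)} < \epsilon\big) > 0$ and $\ell_- = 0$.

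Substituting $(\ell_-, \ell_+) = (0,\infty)$ into the quoted Gaussian-maximum theorem yields absolute continuity of the law of $L_P^{(\sigma)}$ together with a density that is positive and continuous on $(0, \infty)$ away from an at most countable set, which is exactly the assertion of the lemma. I expect the only genuinely substantive ingredient to be this external regularity result: plain absolute continuity of a Gaussian maximum is comparatively soft, but the ``continuous except at countably many points'' refinement must be imported from Tsirelson-type theory and cited precisely. The remaining work is bookkeeping, the one point deserving care being the identification of the left endpoint of the support as exactly $0$ — which uses both $0 \in \mathsf{Lip}_{1,0}$ and the Gaussian small-ball lower bound, and is what lets the conclusion hold on all of $(0,\infty)$ rather than merely on the interior of an a priori unknown support.
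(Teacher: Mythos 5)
Your overall route is the same as the paper's (reduce to the structure theorem of Tsirelson/Davydov--Lifshits--Smorodina for suprema of Gaussian processes, then pin down the endpoints of the support), but there is one genuine gap: the external theorem is mis-stated in a way that skips exactly the step the paper is careful about. It is \emph{not} true that non-degeneracy of the supremum of a bounded centered Gaussian process implies absolute continuity of its law. The classical counterexample is $X_t = tZ$, $t\in[0,1]$, $Z\sim\cN(0,1)$: the supremum is $\max(Z,0)$, which is not a.s.\ constant yet has an atom of mass $1/2$ at the lower endpoint of its support. The correct statement (Theorem 11.1 in Davydov--Lifshits--Smorodina, which is what the paper cites) gives absolute continuity only on $(r_0,\infty)$ with $r_0=\inf\{r: F(r)>0\}$, together with positivity and continuity of the density off a countable set, and explicitly does \emph{not} exclude an atom at $r_0$. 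Since you correctly identify $r_0=\ell_-=0$ via the small-ball argument, your proof as written still leaves open the possibility $\PP\big(L_P^{(\sigma)}=0\big)>0$, which would contradict the absolute continuity asserted in the lemma; this must be ruled out separately.

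The fix is short and you already have the needed ingredient. Because $\mathsf{Lip}_{1,0}$ is symmetric ($-f\in\mathsf{Lip}_{1,0}$ whenever $f\in\mathsf{Lip}_{1,0}$) and $G_P^{(\sigma)}$ is linear in its index, $L_P^{(\sigma)}\ge \max\big(G_P^{(\sigma)}(f_v),G_P^{(\sigma)}(-f_v)\big)=\big|G_P^{(\sigma)}(f_v)\big|$ for your witness $f_v(x)=v\cdot x$ (for which indeed $f_v\ast\gauss=f_v$ and $\Var\big(G_P^{(\sigma)}(f_v)\big)=\Var_P(v\cdot X)>0$ since $P$ is not a point mass); as $G_P^{(\sigma)}(f_v)$ is a nondegenerate real Gaussian, $\PP\big(L_P^{(\sigma)}=0\big)\le\PP\big(G_P^{(\sigma)}(f_v)=0\big)=0$. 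This is precisely how the paper closes the argument (it first notes $L_P^{(\sigma)}=\sup_{f}\big|G_P(f)\big|$ over the smoothed class by symmetry, then shows no jump at $0$). With that one added step your proof matches the paper's; the remaining components --- tightness giving separability/boundedness, the Ledoux--Talagrand small-ball fact for $r_0=0$, and the linear-function witness for non-degeneracy --- are all sound.
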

\begin{proof}[Proof of Lemma \ref{lem: limit variable}]
From the proof of Theorem \ref{thm: CLT for W1} and the fact that $\mathsf{Lip}_{1}$ is symmetric, we have $L_P^{(\sigma)} = \| G_{P} \|_{\check{\calF}}$ with $\| \cdot \|_{\check{\calF}} := \sup_{f \in \check{\calF}}| \cdot |$. Since $G_{P}$ is a tight Gaussian process in $\ell^{\infty}(\check{\calF})$, $\check{\calF}$ is totally bounded for the pseudometric $d_{P} (f,g) = \sqrt{\Var_{P}(f-g)}$, and $G_{P}$ is a Borel measurable map into the space of $d_{P}$-uniformly continuous functions $\calC_{u}(\check{\calF})$ equipped with the uniform norm $\| \cdot \|_{\check{\calF}}$. 
Let $F$ denote the distribution function of $L_P^{(\sigma)}$, and define 
\[
r_{0} := \inf \{ r \ge 0 : F(r) > 0 \}.
\]
From \cite[Theorem 11.1]{Davydov1998}, $F$ is absolutely continuous on $(r_{0},\infty)$, and there exists a countable set $\Delta \subset (r_{0},\infty)$ such that $F'$ is positive and continuous on $(r_{0},\infty) \setminus \Delta$.
The theorem however does not exclude the possibility that $F$ has a jump at $r_0$, and we will verify that (i) $r_0= 0$ and (ii) $F$ has no jump at $r=0$, which lead to the conclusion. 
The former follows from p. 57 in \cite{LeTa1991}.
The latter is trivial since
\[
F(0)-F(0-)=\PP\left(L_P^{(\sigma)}=0\right) \le \PP\big(G_{P}(f) =0\big),
\]
for any $f \in \check{\calF}$. Because $G_{P}$ is Gaussian we have $\PP\big(G_{P}(f) =0\big)=0$ unless $f$ is constant $P$-a.s. 
\end{proof}

\begin{proof}[Proof of Corollary \ref{cor: bootstrap}]
From Theorem 3.6.2 in \cite{VaWe1996} applied to the function class $\check{\mathcal{F}}$, together with the continuous mapping theorem, we see that conditionally on $X_{1},X_{2},\dots$,
\[
\sqrt{n}\gwass(P_{n}^{B},P_{n}) = \sup_{f \in \check{\mathcal{F}}} \sqrt{n}(P_{n}^{B} -P_{n})f \stackrel{d}{\to} L_{P}^{(\sigma)}
\]
for almost every realization of $X_{1},X_{2},\dots$ The desired conclusion follows from the fact that  the distribution function of $L_{P}^{(\sigma)}$ is continuous (cf. Lemma \ref{lem: limit variable}) and Polya's theorem (cf. Lemma 2.11 in \cite{vanderVaart1998_asymptotic}).
\end{proof}


\subsection{Proof of Corollary \ref{cor: concentration}}\label{SUBSEC: concentation}

Case (i) is Corollary 1 in \cite{Goldfeld2020GOT}. 
Cases (ii) and (iii) follow from Theorems 4 and 2 in \cite{Adamczkak2010} and \cite{Adamczkak2008}, respectively, applied to the function class $\check{\calF}$ using the envelope function $\check{F}(x) = \| x \| + \sigma \sqrt{d}$. We omit the details for brevity.
\qed


\section{Proofs for Section \ref{sec: generative}}

\subsection{Preliminaries}

The following technical lemmas will be needed.

\begin{lemma}[Continuity of $\gwass$]\label{LEMMA:MSWD_lsc}
The smooth Wasserstein distance $\gwass$ is lower semicontinuous (l.s.c.) relative to the weak convergence on $\cP(\RR^d)$ and continuous in $\wass$. Explicitly, (i) if $\mu_{k} \rightharpoonup \mu$ and $\nu_{k} \rightharpoonup \nu$,  then
\[
    \liminf_{k\to\infty}\gwass(\mu_k,\nu_{k})\geq\gwass(\mu,\nu); 
\]
and (ii) if $\wass(\mu_{k},\mu) \to 0$ and $\wass(\nu_{k},\nu) \to 0$,  then 
\begin{equation}
    \lim_{k\to\infty}\gwass(\mu_k,\nu_{k})=\gwass(\mu,\nu). 
\end{equation}
\end{lemma}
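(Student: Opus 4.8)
The plan is to reduce both claims to two elementary properties and then invoke a standard fact about the classic $\wass$ distance. The first property I would record is that convolution with the fixed Gaussian $\Gauss$ is \emph{weakly continuous}: if $\mu_k\rightharpoonup\mu$, then $\mu_k\ast\Gauss\rightharpoonup\mu\ast\Gauss$. This is immediate once one notes that for any $h\in C_b(\RR^d)$ the map $x\mapsto (h\ast\gauss)(x)=\int h(x+y)\gauss(y)\dd y$ again lies in $C_b(\RR^d)$ (uniform boundedness by $\|h\|_\infty$; continuity by dominated convergence), so that $(\mu_k\ast\Gauss)(h)=\mu_k(h\ast\gauss)\to\mu(h\ast\gauss)=(\mu\ast\Gauss)(h)$. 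The second property is that convolution with $\Gauss$ is a $\wass$-contraction, $\wass(\mu\ast\Gauss,\nu\ast\Gauss)\le\wass(\mu,\nu)$ --- equivalently $\gwass\le\wass$ --- which I would prove via the shared-noise coupling: given $(X,Y)$ with law $\pi\in\Pi(\mu,\nu)$ and $Z\sim\Gauss$ independent, the law of $(X+Z,Y+Z)$ is a coupling of $\mu\ast\Gauss$ and $\nu\ast\Gauss$ with cost $\EE\|X-Y\|$, and minimizing over $\pi$ gives the bound.

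For part (i), I would combine the weak continuity of convolution with the lower semicontinuity of $\wass$ under weak convergence. From $\mu_k\rightharpoonup\mu$ and $\nu_k\rightharpoonup\nu$ one gets $\mu_k\ast\Gauss\rightharpoonup\mu\ast\Gauss$ and $\nu_k\ast\Gauss\rightharpoonup\nu\ast\Gauss$, hence $\liminf_k\gwass(\mu_k,\nu_k)=\liminf_k\wass(\mu_k\ast\Gauss,\nu_k\ast\Gauss)\ge\wass(\mu\ast\Gauss,\nu\ast\Gauss)=\gwass(\mu,\nu)$. The l.s.c. of $\wass$ is classical (see, e.g., \cite[Remark~6.12]{villani2008optimal}); for a self-contained argument I would write $\wass(P,Q)=\sup\{\int f\dd(P-Q):f\in\mathsf{Lip}_1\cap C_b(\RR^d)\}$ for $P,Q\in\cP_1(\RR^d)$ (the truncation $f_M:=(f\wedge M)\vee(-M)$ of a $1$-Lipschitz $f$ is $1$-Lipschitz and bounded, and $\int f_M\dd(P-Q)\to\int f\dd(P-Q)$ by dominated convergence since $|f|$ has linear growth and $P,Q$ have finite first moment), thereby exhibiting $\wass$ as a supremum of weakly continuous functionals, hence l.s.c.

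For part (ii), the key point is that $\gwass$ inherits the triangle inequality directly from $\wass$, since $\gwass(\mu,\nu)=\wass(\mu\ast\Gauss,\nu\ast\Gauss)$; consequently $|\gwass(\mu_k,\nu_k)-\gwass(\mu,\nu)|\le\gwass(\mu_k,\mu)+\gwass(\nu_k,\nu)$. Combining this with the contraction $\gwass\le\wass$ established in the first paragraph bounds the right-hand side by $\wass(\mu_k,\mu)+\wass(\nu_k,\nu)\to0$, which proves the claim.

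I expect the only genuinely non-routine point to be the weak lower semicontinuity of $\wass$ used in part (i), and even that is standard; the delicate step inside it is the truncation identifying $\sup\{\int f\dd(P-Q):f\in\mathsf{Lip}_1\cap C_b(\RR^d)\}$ with $\wass(P,Q)$, which needs $P,Q$ to have finite first moment --- automatic here since $\mu\ast\Gauss,\nu\ast\Gauss\in\cP_1(\RR^d)$ whenever $\mu,\nu\in\cP_1(\RR^d)$. Everything else is a short coupling or triangle-inequality computation.
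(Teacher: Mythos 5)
Your proof is correct. For part (i) you follow essentially the paper's route: weak continuity of convolution with $\Gauss$ (which you prove by testing against $h\in C_b(\RR^d)$ and observing $h\ast\gauss\in C_b(\RR^d)$, whereas the paper argues through pointwise convergence of characteristic functions --- an equivalent, equally short step), followed by lower semicontinuity of $\wass$ under weak convergence, which the paper simply cites from \cite{villani2008optimal}. For part (ii) you take a genuinely more self-contained path: the paper invokes the topological equivalence of $\gwass$ and $\wass$ (Theorem 2 of \cite{Goldfeld2020GOT}) to convert $\wass$-convergence of each sequence into $\gwass$-convergence, and then appeals to continuity of $\wass$ along $\wass$-convergent sequences; you instead prove the contraction $\gwass\le\wass$ directly via the shared-noise coupling $(X+Z,Y+Z)$ and finish with the triangle inequality for $\gwass$. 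Your version is more elementary and avoids the external citation (the coupling bound is precisely the ingredient behind the direction of the cited equivalence that is actually needed), at the cost of writing out two short lemmas the paper gets for free from prior work. One small caution: your self-contained argument for the lower semicontinuity of $\wass$ (duality restricted to bounded $1$-Lipschitz functions plus truncation and dominated convergence) needs the smoothed measures to have finite first moments, while the lemma is stated for weak convergence on all of $\cP(\RR^d)$, where $\gwass$ may be $+\infty$. This is harmless, since you also cite the classical l.s.c. statement, which holds without moment assumptions, and every application in the paper concerns measures in $\cP_1(\RR^d)$; but if you want the truncation argument to stand alone on $\cP(\RR^d)$ you should use the bounded-Lipschitz form of Kantorovich--Rubinstein duality valid with both sides possibly infinite.
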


\begin{proof}
Part (i). We first note that if $\mu_k \rightharpoonup \mu$, then $\mu_k \ast \Gauss \rightharpoonup \mu \ast \Gauss$. This follows from the facts that weak convergence is equivalent to pointwise convergence of characteristic functions, and  the Gaussian measure has a nonvanishing characteristic function $\EE_{X \sim \Gauss}[e^{it \cdot  X}]=e^{-\sigma^2\|t\|^2/2}\neq 0$ for all $t\in\RR^d$. Now, if $\mu_k \rightharpoonup \mu$ and $\nu_k \rightharpoonup \nu$, then $\mu_k\ast\Gauss \rightharpoonup \mu \ast \Gauss$ and $\nu_k \ast \Gauss \rightharpoonup \nu \ast \Gauss$. 
From the lower semicontinuity of $\wass$ relative to the weak convergence (cf. Remark 6.10 in \cite{villani2008optimal}), we conclude that $\liminf_{k \to \infty} \gwass (\mu_k,\nu_k) = \liminf_{k \to \infty} \wass (\mu_k\ast\Gauss,\nu_k\ast\Gauss) \ge \wass (\mu\ast\Gauss,\nu\ast\Gauss) = \gwass (\mu,\nu)$.

Part (ii). Recall that $\gwass$ generates the same topology as $\wass$, i.e., 
\[
\gwass (\mu_{k},\mu) \to 0\ \  \iff\ \ \wass (\mu_k,\mu) \to 0.
\]
See Theorem 2 in \cite{Goldfeld2020GOT}. So if $\mu_k \to \mu$ and $\nu_k \to \nu$ in $\wass$, then $\gwass (\mu_k,\mu) = \wass (\mu_k\ast\Gauss,\mu \ast \Gauss) \to 0$ and $\gwass (\nu_k,\nu) = \wass (\nu_k\ast\Gauss,\nu \ast \Gauss) \to 0$. Thus, by Corollary 6.9 in \cite{villani2008optimal}, we have $\gwass (\mu_k,\nu_k) = \wass (\mu_k \ast \Gauss,\nu_k \ast \Gauss) \to \wass (\mu_k \ast \Gauss,\nu_k \ast \Gauss) = \gwass (\mu,\nu)$.
\end{proof}

\begin{lemma}[Weierstrass criterion for the existence of minimizers]
\label{lem: Weierstrass}
Let $\mathcal{X}$ be a compact metric space, and let $f: \mathcal{X} \to \R \cup \{ +\infty \}$ be l.s.c. (i.e., $\liminf_{x \to \overline{x}} f(x) \ge f(\overline{x})$ for any $\overline{x} \in \mathcal{X}$). Then, $\argmin_{x \in \mathcal{X}} f(x)$ is nonempty. 
\end{lemma}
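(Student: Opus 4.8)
The plan is to use the classical direct method of the calculus of variations: extract a limit point of a minimizing sequence by compactness, then close the argument with lower semicontinuity. First I would dispose of the degenerate case $f \equiv +\infty$, in which every point of $\mathcal{X}$ is a minimizer and there is nothing to prove. So assume $m := \inf_{x \in \mathcal{X}} f(x) < +\infty$; note that a priori we only know $m \in [-\infty,+\infty)$, and the argument below will in fact also establish $m \in \R$.

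Next I would pick a minimizing sequence $\{ x_{n} \}_{n \in \NN} \subset \mathcal{X}$ with $f(x_{n}) \to m$ (such a sequence exists by definition of the infimum, regardless of whether $m$ is finite or $-\infty$). Since $\mathcal{X}$ is a compact metric space it is sequentially compact, so there are a subsequence $\{ x_{n_{k}} \}$ and a point $\overline{x} \in \mathcal{X}$ with $x_{n_{k}} \to \overline{x}$. Applying the lower semicontinuity hypothesis in its sequential form $\liminf_{k} f(x_{n_{k}}) \ge f(\overline{x})$ — which follows from $\liminf_{x \to \overline{x}} f(x) \ge f(\overline{x})$ after separating out the indices (if any) with $x_{n_{k}} = \overline{x}$, on which the inequality is trivial — I get
\[
f(\overline{x}) \le \liminf_{k \to \infty} f(x_{n_{k}}) = \lim_{n \to \infty} f(x_{n}) = m .
\]
Since also $f(\overline{x}) \ge m$ by definition of $m$, we conclude $f(\overline{x}) = m$. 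As $f(\overline{x}) \in \R \cup \{ +\infty \}$ and $m < +\infty$, this forces $m \in \R$ and $\overline{x} \in \argmin_{x \in \mathcal{X}} f(x)$, so the latter set is nonempty.

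I do not expect a genuine obstacle here, as this is a textbook fact; the only points requiring mild care are (a) reconciling the neighborhood formulation of lower semicontinuity stated in the lemma with the sequential formulation used above, and (b) noting that the minimizing-sequence argument remains valid even when $m$ is $-\infty$ a priori, the argument simultaneously ruling this possibility out.
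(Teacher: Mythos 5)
Your proof is correct, and it is exactly the standard direct-method argument (minimizing sequence, sequential compactness, lower semicontinuity) that the paper itself does not spell out but simply cites from p.~3 of \cite{santambrogio2015}. The extra care you take with the degenerate case $f\equiv+\infty$, the a priori possibility $m=-\infty$, and the passage from the neighborhood to the sequential form of lower semicontinuity is all sound and matches the textbook treatment.
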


\begin{proof}
See, e.g., p. 3 of \cite{santambrogio2015}. 
\end{proof}


\subsection{Proof of Theorem \ref{TM:MSWD_measurable}}\label{SUBSEC:MSWD_measurable_proof}

By Lemma \ref{lem: Weierstrass},  compactness of $\Theta$, and lower semicontinuity of the map $\theta \mapsto \gwass (P_n(\omega),Q_{\theta})$ (cf. Lemma \ref{LEMMA:MSWD_lsc}), we see that $\argmin_{\theta \in \Theta} \gwass (P_n(\omega),Q_{\theta})$ is nonempty. 

To prove the existence of a measurable estimator, we will apply
Corollary 1 in \cite{brown1973measurable}. Consider the empirical distribution as a function on $\mathcal{X}^{\mathbb{N}}$ with $\mathcal{X} = \R^{d}$, i.e.,  $\mathcal{X}^{\mathbb{N}} \ni x = (x_{1},x_{2},\dots) \mapsto P_{n}(x) = n^{-1}\sum_{i=1}^{n} \delta_{x_{i}}$. Observe that $\cX^{\mathbb{N}}$ and $\R^{d_{0}}$ are both Polish, $\cD:=\mathcal{X}^{\mathbb{N}}\times\Theta$ is a Borel subset of the product metric space $\mathcal{X}^{\mathbb{N}} \times \R^{d_{0}}$, the map $\theta \mapsto \gwass (P_{n}(x),Q_{\theta})$ is l.s.c. by Lemma \ref{LEMMA:MSWD_lsc}, and the set $\cD_x=\big\{\theta \in \Theta:(x,\theta)\in\cD\big\}\subset\RR^{d_0}$ is  $\sigma$-compact (as any subset in $\R^{d_{0}}$ is $\sigma$-compact). Thus, in view of Corollary 1 of \cite{brown1973measurable}, it suffices to verify that the map $(x,\theta) \mapsto \gwass (P_{n}(x),Q_{\theta})$ is jointly measurable.

To this end, we use the following fact: for a real function $\mathcal{Y} \times \mathcal{Z} \ni (y,z) \mapsto f(y,z) \in \R$ defined on the product of a separable metric space $\mathcal{Y}$ (endowed with the Borel $\sigma$-field) and a measurable space $\mathcal{Z}$, if $f(y,z)$ is continuous in $y$ and measurable in $z$, then $f$ is jointly measurable; see e.g. Lemma 4.51 in \cite{aliprantis2006}. Equip $\mathcal{P}_{1}(\R^{d})$ with the metric $\wass$ and the associated Borel $\sigma$-field; the metric space $(\mathcal{P}_{1}(\R^{d}),\wass)$ is separable \cite[Theorem 6.16]{villani2008optimal}. Then, since the map $\mathcal{X}^{\mathbb{N}}\ni x\mapsto P_n(x)\in\mathcal{P}_{1}(\R^{d})$ is continuous (which is not difficult to verify), the map $\mathcal{X}^{\mathbb{N}} \times \Theta\ni (x,\theta) \mapsto (P_{n}(x),\theta)\in\mathcal{P}_{1}(\R^{d}) \times \Theta$ is continuous and thus measurable. 
Second, by Lemma \ref{LEMMA:MSWD_lsc}, the function $\mathcal{P}_{1}(\R^{d}) \times \Theta\ni (\mu,\theta) \mapsto \gwass (\mu,Q_{\theta})\in[0,\infty)$ is continuous in $\mu$ and l.s.c. (and thus measurable) in $\theta$, from which we see that the map $(\mu,\theta) \mapsto \gwass (\mu,Q_{\theta})$ is jointly measurable. 
Conclude that the map $(x,\theta) \mapsto \gwass (P_{n}(x),Q_{\theta})$ is jointly measurable. 
\qed

\subsection{Proof of Theorem \ref{TM:MSWD_inf_argmin}}\label{SEC:MSWD_inf_argmin_proof}

The proof relies on Theorem 7.33 in \cite{rockafellar2009variational}, and is reminiscent of that of Theorem B.1 in \cite{bernton2019parameter};  we present a simpler derivation under our assumption.\footnote{Theorem B.1 in \cite{bernton2019parameter} applies Theorem 7.31 in \cite{rockafellar2009variational}. To that end, one has to extend the maps $\theta \mapsto \mathcal{W}_{p}(\hat{\mu}_n,\mu_{\theta})$ and $\theta \mapsto \mathcal{W}_{p}(\mu_{\star},\mu_{\theta})$ to the entire Euclidean space $\R^{d_{\theta}}$. The extension was not mentioned in the proof of \cite[Theorem B.1]{bernton2019parameter}, although this missing step does not affect their final result.} To apply Theorem 7.33 in \cite{rockafellar2009variational}, we extend the map $\theta \mapsto \gwass (P_n,Q_{\theta})$ to the entire Euclidean space $\R^{d_{0}}$ as 
\[
g_n(\theta) := 
\begin{cases}
\gwass (P_n,Q_{\theta}) & \text{if \ $\theta \in \Theta$} \\
+\infty & \text{if \ $\theta \in \R^{d_{0}} \setminus \Theta$}
\end{cases}
.
\]
Likewise, define 
\[
g(\theta) := 
\begin{cases}
\gwass (P,Q_{\theta}) & \text{if \ $\theta \in \Theta$} \\
+\infty & \text{if \ $\theta \in \R^{d_{0}} \setminus \Theta$}
\end{cases}
.
\]
The function $g_n$ is stochastic, $g_n(\theta) = g_n(\theta,\omega)$, but $g$ is non-stochastic. 
By construction, we see that $\argmin_{\theta \in \R^{d_{0}}} g_{n}(\theta) = \argmin_{\theta \in \Theta} \gwass (P_n,Q_{\theta})$ and $\argmin_{\theta \in \R^{d_{0}}} g(\theta) = \argmin_{\theta \in \Theta} \gwass (P,Q_{\theta})$. In addition, by Lemma \ref{LEMMA:MSWD_lsc}, continuity of the map $\theta \mapsto Q_{\theta}$ relative to the weak topology, and closedness of the parameter space $\Theta$, we see that both $g_n$ and $g$ are l.s.c. (on $\R^{d_{0}}$). The main step of the proof is to show a.s. epi-convergence of $g_n$ to $g$. Recall the definition of epi-convergence (in fact, this is an equivalent characterization; see \cite[Proposition 7.29]{rockafellar2009variational}):

\begin{definition}[Epi-convergence]
For extended-real-valued functions $f_{n}, f$ on $\R^{d_{0}}$ with $f$ being l.s.c., 
we say that $f_n$ epi-converges to $f$ if the following two conditions hold:
\begin{enumerate}
    \item[(i)] $\liminf_{n\to\infty}\inf_{\theta\in\cK}f_n (\theta) \geq \inf_{\theta \in\cK}f(\theta)$ for any compact set $\cK\subset \R^{d_{0}}$; and
    \item[(ii)] $\limsup_{n\to\infty}\inf_{\theta \in\cU}f_n(\theta)\leq \inf_{\theta\in\cU}f(\theta)$ for any open set $\cU\subset \R^{d_{0}}$.
\end{enumerate}
\end{definition}

We also need the concept of level-boundedness.

\begin{definition}[Level-boundedness]
For an extended-real-valued function $f$ on $\R^{d_{0}}$, we say that $f$ is level-bounded if for any $\alpha \in \R$, the set $\{ \theta \in \R^{d_{0}} : f(\theta) \le \alpha \}$ is bounded (possibly empty). 
\end{definition}

We are now in position to prove Theorem \ref{TM:MSWD_inf_argmin}. 

\medskip

\begin{proof}[Proof of Theorem \ref{TM:MSWD_inf_argmin}]
By boundedness of the parameter space $\Theta$, both  $g_n$ and $g$ are level-bounded by construction as the (lower) level sets are included in $\Theta$. In addition, by assumption, both $g_n$ and $g$ are proper (an extended-real-valued function $f$ on $\R^{d_{0}}$ is proper if the set $\{ \theta \in \R^{d_{0}} : f(\theta) < \infty \}$ is nonempty). 
In view of Theorem 7.33 in \cite{rockafellar2009variational}, it remains to prove that $g_n$ epi-converges to $g$ a.s. 
To verify property (i) in the definition of epi-convergence, recall that $P_n \to  P$ in $\wass$ (and hence in $\gwass$) a.s.  Pick any $\omega \in \Omega$ such that $P_{n}(\omega) \to P$ in $\wass$. Pick any compact set $\cK\subset\R^{d_{0}}$. Since $g_n(\cdot,\omega)$ is l.s.c., by Lemma \ref{lem: Weierstrass}, there exists $\theta_n (\omega) \in\mathcal{K}$ such that $g_n(\theta_n(\omega),\omega)=\inf_{\theta \in \mathcal{K}} g_n(\theta,\omega)$.  Up to extraction of subsequences, we may assume $\theta_n(\omega) \to \theta^\star(\omega)$ for some $\theta^\star (\omega)\in\cK$. If $\theta^{\star}(\omega) \notin \Theta$, then by closedness of $\Theta$, $\theta_n(\omega) \notin \Theta$ for all sufficiently large $n$. Thus, we have
\[
\liminf_{n\to\infty}\inf_{\theta \in \mathcal{K}} g_n(\theta,\omega) = \liminf_{n \to \infty} g_n(\theta_n(\omega),\omega) = + \infty,
\]
so that $\liminf_{n\to\infty}\inf_{\theta \in \mathcal{K}} g_n(\theta,\omega) \ge \inf_{\theta \in \mathcal{K}} g(\theta)$. 
Next, consider the case where $\theta^{\star}(\omega) \in \Theta$. In this case, $\theta_n (\omega) \in \Theta$ for all $n$ (otherwise, $+\infty = g_n(\theta_n(\omega),\omega) > g_n (\theta^{\star}(\omega),\omega)$, which contradicts the construction of $\theta_n(\omega)$). Thus, $g_n(\theta_n(\omega),\omega) = \gwass (P_n(\omega),Q_{\theta_n(\omega)})$, so that 
\begin{align*}
    \liminf_{n\to\infty}\inf_{\theta \in \mathcal{K}} g_{n}(\theta_n(\omega),\omega)&=\liminf_{n\to\infty}\gwass (P_{n}(\omega),Q_{\theta_{n}(\omega)})\\
    &\stackrel{(a)}{\geq} \gwass(P,Q_{\theta^\star(\omega)}) \\
    &\geq \inf_{\theta\in\cK}g (\theta), \numberthis\label{EQ:epi_conv_inf}
\end{align*}
where (a) follows from  Lemma \ref{LEMMA:MSWD_lsc}.

To verify property (ii) in the definition of epi-convergence, pick any open set $\cU\subset\Theta$. It is enough to consider the case where $\cU \cap \Theta \neq \varnothing$. 
Let $\{\theta_n'\}_{n=1}^\infty\subset\cU$ be a sequence with $\lim_{n\to\infty}g(\theta_n')=\inf_{\theta\in\cU}g(\theta)$. Since $\inf_{\theta \in \cU} g(\theta)$ is finite, we may assume that $\theta_n' \in \cU \cap \Theta$ for all $n$. 
 Thus, we have
\begin{align*}
    \limsup_{n\to\infty}\inf_{\theta \in \cU} g_{n}(\theta,\omega) &\leq\limsup_{n\to\infty}g_{n}(\theta_n',\omega) \\
    &=\limsup_{n \to \infty} \gwass (P_n(\omega),Q_{\theta_n'}) \\
    &\leq \underbrace{\lim_{n\to\infty}\gwass(P_n(\omega),P)}_{=0}+\underbrace{\lim_{n\to\infty}\gwass(P,Q_{\theta_n'})}_{=\inf_{\theta \in \cU} g(\theta)}\\
    &= \inf_{\theta\in\cU} g(\theta). \numberthis \label{EQ:epi_conv_sup}
\end{align*}
Conclude that $g_n$ epi-converges to $g$ a.s. This completes the proof. 
\end{proof}

\subsection{Proof of Theorem \ref{TM:MSWD_inf_CLT}}\label{SUBSEC:MSWD_inf_CLT_proof}
Recall that $P = Q_{\theta^{\star}}$. Condition (ii) implies that $\argmin_{\theta \in \Theta}\gwass (P,Q_{\theta}) = \{ \theta^{\star} \}$. Hence, by Theorem \ref{TM:MSWD_inf_argmin},
for any neighborhood $N$ of $\theta^\star$, 
\[
    \inf_{\theta\in\Theta}\gwass(P_n,Q_\theta)=\inf_{\theta\in N}\gwass(P_n,Q_\theta)
\]
with probability approaching one. 

Define $R^{(\sigma)}_{\theta}:=Q^{(\sigma)}_\theta-P^{(\sigma)}-\left<\theta-\theta^\star,D^{(\sigma)}\right> \in \ell^{\infty}(\mathsf{Lip}_{1,0})$, and choose $N_1$ as a neighborhood of $\theta^\star$ such that
\begin{equation}
    \lip{\ip{\theta-\theta^\star,D^{(\sigma)}}}-\lip{R^{(\sigma)}_{\theta}}\geq \frac{1}{2}C,\quad\forall\theta\in N_1,\label{EQ:SMWE_const_LB}
\end{equation}
for some constant $C>0$. Such $N_1$ exists since conditions (iii) and (iv)  ensure the existence of an increasing function $\eta(\delta) = o(1)$ (as $\delta \to 0$) and a constant $C>0$  such that $\lip{R^{(\sigma)}(\theta)}\leq \|\theta-\theta^\star\|\eta\big(\|\theta-\theta^\star\|\big)$ and $\lip{\ip{t,D^{(\sigma)}}}\geq C\|t\|$ for all $t\in\RR^{d_0}$.

For any $\theta\in N_1$, the triangle inequality and \eqref{EQ:SMWE_const_LB} imply that
\begin{equation}
    \gwass(P_n,Q_\theta)\geq \frac{C}{2}\|\theta-\theta^\star\|-\gwass(P_n,P).
    \label{eq: triangle}
\end{equation}
For $\xi_n:=\frac{4\sqrt{n}}{C}\gwass(P_n,P)$, consider the (random) set $N_2:=\big\{\theta\in\Theta: \sqrt{n}\|\theta-\theta^\star\|\leq \xi_n\big\}$. Note that $\xi_n$ is of order $O_{\PP}(1)$ by Theorem \ref{thm: CLT for W1}. By the definition of $\xi_{n}$,  $\inf_{\theta\in N_1}\gwass(P_n,Q_\theta)$ is unchanged if $N_1$ is replaced with $N_1\cap N_2$; indeed, if $\theta \in N_{2}^{c}$, then $\gwass(P_n,Q_\theta) > \frac{C}{2} \frac{\xi_{n}}{\sqrt{n}} - \gwass(P_n,P) = \gwass(P_{n},P)$, so that $\inf_{\theta \in N_{2}^{c}} \gwass (P_{n},Q_{\theta}) > \gwass (P_{n},P) \ge \inf_{\theta \in N_{1}} \gwass (P_{n},Q_{\theta})$. 

Reparametrizing $t:=\sqrt{n}(\theta-\theta^\star)$ and setting $T_n:=\big\{t\in\RR^{d_0}: \|t\|\leq \xi_n,\  \theta^\star+t/\sqrt{n}\in\Theta\big\}$, we have the following approximation 
\begin{equation}
\begin{split}
    \sup_{t\in T_n}\bigg|\sqrt{n}\underbrace{\lip{P^{(\sigma)}_n-Q^{(\sigma)}_{\theta^\star+t/\sqrt{n}}}}_{=\gwass(P_{n},Q_{\theta^{\star}+t/\sqrt{n}})}&-\lip{\underbrace{\sqrt{n}\big(P^{(\sigma)}_n-P^{(\sigma)}\big)}_{=\GG^{(\sigma)}_{n}}-\ip{t,D^{(\sigma)}}}\bigg|\\
    &\leq\sup_{t\in T_n}\sqrt{n}\lip{R^{(\sigma)}_{\theta^{\star} + t/\sqrt{n}}}\\
    &\leq \xi_n\eta(\xi_n/\sqrt{n})\\
    &=o_{\PP}(1).
    \label{eq: approximation}
\end{split}
\end{equation}

Observe that any minimizer $t^\star\in\RR^{d_0}$ of the  function $h_n(t):=\big\|\GG^{(\sigma)}_n-\ip{t,D^{(\sigma)}}\big\|_{\mathsf{Lip}_{1,0}}$ satisfies $\|t^\star\|\leq \xi_n$; indeed if $\| t^{\star} \| > \xi_{n}$, then $h_{n}(t^{\star}) \ge C\|t^{\star} \| - \| \GG_{n}^{(\sigma)} \|_{\mathsf{Lip}_{1,0}} = C\|t^{\star} \| -\sqrt{n}\gwass (P_{n},P) = 3 \sqrt{n} \gwass (P_{n},P) = 3 h_{n}(0)$, which contradicts the assumption that $t^{\star}$ is a minimizer of $h_{n}(t)$. Since by assumption $\theta^\star\in\interior(\Theta)$, the set of minimizers of $h_n$ lies inside $T_n$. Conclude that
\begin{equation}
    \inf_{\theta\in\Theta}\sqrt{n} \gwass(P_n,Q_\theta)=\inf_{t\in\RR^{d_0}}\big\|\GG^{(\sigma)}_n-\ip{t,D^{(\sigma)}}\big\|_{\mathsf{Lip}_{1,0}}+o_{\PP}(1).\label{EQ:SMWE_final_proxy}
\end{equation}
Now, from the proof of Theorem \ref{thm: CLT for W1} and the fact that the map $G \mapsto (G(f \ast \varphi_{\sigma}))_{f \in \mathsf{Lip}_{1,0}}$ is continuous (indeed, isometric) from $ \ell^{\infty}(\check{\mathcal{F}})$ into $\ell^{\infty}(\mathsf{Lip}_{1,0})$, we see that $(\GG^{(\sigma)}_nf )_{f \in \mathsf{Lip}_{1,0}} \to G_{P}^{(\sigma)}$
weakly in $\ell^{\infty}(\mathsf{Lip}_{1,0})$    

Applying the continuous mapping theorem to $L \mapsto \inf_{t\in\RR^{d_0}}\lip{L -\ip{t,D^{(\sigma)}}}$ and using the approximation \eqref{EQ:SMWE_final_proxy}, we obtain the conclusion of the theorem.\qed

\subsection{Proof of Corollary \ref{cor:unique_solution}}
\label{subsec: unique}

The proof relies on the following result on weak convergence of argmin solutions of convex stochastic functions.
The following lemma is a simple modification of Theorem 1 in \cite{kato2009}. Similar techniques can be found in \cite{pollard1991} and \cite{hjort1993}.

\begin{lemma}
\label{lem: convexity}
Let $H_n(t)$ and $H(t)$ be convex stochastic functions on $\R^{d_{0}}$. Suppose that (i) $\argmin_{t \in \R^{d_{0}}} H(t)$ is unique a.s., and (ii) for any finite set of points $t_1,\dots,t_k \in \R^{d_{0}}$, we have $(H_n(t_1),\dots,H_n(t_k)) \stackrel{d}{\to} (H(t_1),\dots,H(t_k))$. Then, for any sequence $\{ \hat{t}_{n} \}_{n \in \mathbb{N}}$ such that $H_{n}(\hat{t}_{n}) \le \inf_{t \in \R^{d_{0}}} H_{n}(t) + o_{\PP}(1)$, we have $\hat{t}_{n} \stackrel{d}{\to} \argmin_{t \in \R^{d_{0}}}H(t)$. 
\end{lemma}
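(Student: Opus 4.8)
The plan is to promote the finite-dimensional convergence of hypothesis~(ii) to weak convergence of the processes $H_n$ in the Polish space $C(\RR^{d_0})$ of continuous functions equipped with the topology of uniform convergence on compact sets, and then to extract the asymptotics of the near-minimizers by an argmin continuous-mapping argument. Two classical facts will drive this. First, for convex functions, pointwise (hence finite-dimensional) convergence automatically upgrades to uniform convergence on compacta, because a finite-valued convex function on a compact set is controlled---in sup-norm and in its Lipschitz modulus---by finitely many of its values on a slightly larger set (cf.~\cite{rockafellar2009variational,pollard1991}). Second, once the processes live in a Polish function space, the almost sure representation theorem reduces the weak convergence of argmins to a closed-set argument. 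This is essentially the architecture of \cite[Theorem~1]{kato2009} (see also \cite{pollard1991,hjort1993}); the only new element is carrying the near-minimization slack $o_{\PP}(1)$ through the argument.

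\noindent\textbf{Step 1: functional weak convergence of $H_n$.} To establish this, I would fix a compact $K\subset\RR^{d_0}$ and enclose it in the interior of a cube $Q$ with center $c$. By convexity, $\sup_K H_n$ is at most the maximum of $H_n$ over the finitely many vertices of $Q$; a secant estimate along lines through $c$ bounds $\inf_Q H_n$ from below in terms of $H_n(c)$ and those vertex values, and hence bounds the Lipschitz constant of $H_n$ restricted to $K$ by the oscillation of $H_n$ over the vertices of $Q$ and $c$, divided by the distance from $K$ to the boundary of $Q$. Thus $\sup_K|H_n|$ and the modulus of continuity of $H_n|_K$ are each dominated by a continuous function of finitely many evaluations of $H_n$, which are tight since they converge in distribution by~(ii); an Arzel\`a--Ascoli argument carried out along an exhaustion $K_j\uparrow\RR^{d_0}$ then yields tightness of $\{H_n\}$ in $C(\RR^{d_0})$. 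Combined with the convergence of all finite-dimensional distributions---which identifies any subsequential weak limit with $H$, a.s.\ continuous as a finite-valued convex function---this gives $H_n\stackrel{d}{\to}H$ in $C(\RR^{d_0})$.

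\noindent\textbf{Step 2: coercivity and the limiting argmin.} Hypothesis~(i) forces $H$ to be coercive a.s.: a finite-valued convex function that is not coercive has an unbounded closed convex sublevel set, hence contains a ray, and is non-increasing along that direction from every base point, so any minimizer would span an entire ray of minimizers---contradicting uniqueness. Coercivity of $H$, convexity, Step~1, and the inequality $H_n(\hat t_n)\le H_n(0)+o_{\PP}(1)$ then make $\{\hat t_n\}$ tight in $\RR^{d_0}$: if $\|\hat t_n\|>R$, the segment from $0$ to $\hat t_n$ meets the sphere of radius $R$ at a point where, by convexity, $H_n$ is at most $H_n(0)+\epsilon_n$, while coercivity together with Step~1 make the infimum of $H_n$ over that sphere exceed $H_n(0)+1$ with probability tending to one once $R$ is large. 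Writing $H_n(\hat t_n)\le\inf_{\RR^{d_0}}H_n+\epsilon_n$ with $\epsilon_n=o_{\PP}(1)$, the triples $(H_n,\hat t_n,\epsilon_n)$ are tight in $C(\RR^{d_0})\times\RR^{d_0}\times[0,\infty)$; pass to a weakly convergent subsequence, whose limit $(\bar H,\bar t,\bar\epsilon)$ satisfies $\bar H\stackrel{d}{=}H$ and $\bar\epsilon=0$ a.s., and apply the almost sure representation theorem to obtain versions with $\tilde H_n\to\tilde H$ in $C(\RR^{d_0})$, $\tilde t_n\to\tilde t$, and $\tilde\epsilon_n\to0$, all a.s. The set $\{(g,t,e):g(t)\le\inf_{\RR^{d_0}}g+e\}$ is closed, since $g\mapsto\inf_{\RR^{d_0}}g$ is upper semicontinuous for uniform convergence on compacta and hence $(g,t,e)\mapsto g(t)-\inf g-e$ is lower semicontinuous; passing to the limit gives $\tilde H(\tilde t)\le\inf_{\RR^{d_0}}\tilde H$, i.e.\ $\tilde t=\argmin_{t}\tilde H(t)$ by uniqueness. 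Hence $\hat t_n\stackrel{d}{\to}\argmin_{t\in\RR^{d_0}}H(t)$ along the subsequence, and since every subsequence admits a further subsequence with the same limit law, in full.

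\noindent\textbf{Anticipated main obstacle.} The load-bearing step is Step~1---upgrading marginal finite-dimensional convergence to tightness and weak convergence in $C(\RR^{d_0})$. This is false for generic stochastic processes and is rescued here only by convexity: the vertex sandwich bound and the secant Lipschitz estimate, together with the Arzel\`a--Ascoli bookkeeping over an exhaustion of $\RR^{d_0}$. Once $H_n\stackrel{d}{\to}H$ holds at the functional level, the remaining ingredients---coercivity of the limit from uniqueness, tightness of $\{\hat t_n\}$, and the closed-set identification of the limiting argmin via the almost sure representation---are comparatively soft.
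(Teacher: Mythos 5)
Your argument is correct. Note that the paper does not actually write out a proof of this lemma: it simply invokes Theorem 1 of \cite{kato2009} (with the remark that allowing the $o_{\PP}(1)$ near-minimization slack is a "simple modification"), pointing also to \cite{pollard1991} and \cite{hjort1993}. What you have produced is a self-contained reconstruction of exactly that convexity argument, so the route is the same in spirit, though the bookkeeping differs: the cited proofs typically combine the deterministic convexity lemma (pointwise convergence of convex functions implies uniform convergence on compacta) with a Skorokhod representation over a countable dense set and then the Hjort--Pollard two-radius comparison lemma to localize near-minimizers, whereas you prove tightness of $\{H_n\}$ in $C(\RR^{d_0})$ directly from the vertex/secant bounds, take a joint Skorokhod representation of $(H_n,\hat t_n,\epsilon_n)$, and identify the limit of $\hat t_n$ through the closed set $\{(g,t,e): g(t)\le \inf g + e\}$, using upper semicontinuity of $g\mapsto\inf g$. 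Both implementations are sound; yours handles the $o_{\PP}(1)$ slack transparently (which is precisely the modification the paper alludes to), at the cost of a bit more functional-convergence machinery, while the Hjort--Pollard comparison route avoids functional tightness altogether. The only points you gloss over are routine: deducing coercivity and boundedness of sublevel sets from a.s.\ uniqueness of the argmin (standard recession-cone facts), and the Borel measurability of the events "argmin exists and is unique" and of the argmin functional needed to transfer these properties to the Skorokhod versions; these are handled (or implicitly assumed) in the cited references and do not constitute a gap.
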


\begin{proof}[Proof of Corollary \ref{cor:unique_solution}]
By Theorem \ref{TM:MSWD_inf_argmin},  $\hat{\theta}_{n} \to \theta^{\star}$ in probability. 
From equation (\ref{eq: triangle}) and the definition of $\hat{\theta}_{n}$, we see that, with probability approaching one, 
\[
\underbrace{\inf_{\theta \in \Theta} \sqrt{n}\gwass (P_{n},Q_{\theta})}_{=O_{\PP}(1)} + o_{\PP}(1) \ge \sqrt{n}\gwass(P_{n},Q_{\hat{\theta}_{n}}) \ge \frac{C}{2} \sqrt{n}\| \hat{\theta}_{n} - \theta^{\star} \| - \underbrace{\sqrt{n} \gwass (P_{n},P)}_{=O_{\PP}(1)},
\]
which implies that $\sqrt{n}\|\hat{\theta}_{n} - \theta^{\star}\| = O_{\PP}(1)$. 
Let $H_{n}(t) := \big\|\GG^{(\sigma)}_n-\ip{t,D^{(\sigma)}}\big\|_{\mathsf{Lip}_{1,0}}$ and $H(t):= \lip{\GG_{P}^{(\sigma)} -\ip{t,D^{(\sigma)}}}$.
Both $H_n(t)$ and $H(t)$ are convex in $t$. Then, from equation (\ref{eq: approximation}), for $\hat{t}_{n} := \sqrt{n}(\hat{\theta}_{n} - \theta^{\star}) = O_{\PP}(1)$, we have 
\[
\sqrt{n}\gwass (P_{n},Q_{\hat{\theta}_{n}}) = H_{n}(\hat{t}_{n}) + o_{\PP}(1).
\]
Combining the result (\ref{EQ:SMWE_final_proxy}) and the definition of $\hat{\theta}_{n}$, we see that $H_{n}(\hat{t}_{n}) \le \inf_{t \in \R^{d_{0}}} H_{n}(t) + o_{\PP}(1)$.
Since $\GG_n^{(\sigma)}$ converges weakly to $G_{P}^{(\sigma)}$ in $\ell^{\infty}(\mathsf{Lip}_{1,0})$, by the continuous mapping theorem, we have $(H_n(t_1),\dots,H_n(t_k)) \stackrel{d}{\to} (H(t_1),\dots,H(t_k))$ for any finite number of points $t_1,\dots,t_k \in \R^{d_{0}}$. By assumption, $\argmin_{t \in \R^{d_{0}}} H(t)$ is unique a.s. Hence, by Lemma \ref{lem: convexity}, we conclude that $\hat{t}_{n} \stackrel{d}{\to} \argmin_{t \in \R^{d_{0}}} H(t)$. 
\end{proof}

\begin{remark}[Alternative proofs]
Corollary \ref{cor:unique_solution} alternatively follows from the proof of Theorem \ref{TM:MSWD_inf_CLT} combined with the argument given at the end of p. 63 in \cite{pollard1980minimum} (plus minor modifications), or the result of Theorem \ref{TM:MSWD_argmin_CLT} combined with the argument given at the end of p. 67 in \cite{pollard1980minimum}. The proof provided above is differs from both these arguments and is more direct.
\end{remark}

\section{Additional Experiments}
\label{supp:exp}
Figure \ref{fig:OneDimSupp} shows tne-dimensional limiting distributions for: (a) the mean and variance of an MSWE-based generative model fitted to $P=\cN(\mu_\star,\sigma_\star^2)$, with $\mu_\star=0$ and $\sigma_\star=1$; and (b) the two mean parameters of the mixture $P=0.5\cN(\mu_1,1)+0.5\cN(\mu_2,1)$, for $\mu_1=0$ and $\mu_2=1$ (repeated from the main text). Also shown on a log-log scale (with 1-sigma error bars) is the SWD convergence as a function of $n$.
\begin{figure}[!t]
	\begin{subfigure}[b]{1.0\linewidth}
    		\includegraphics[width=\textwidth]{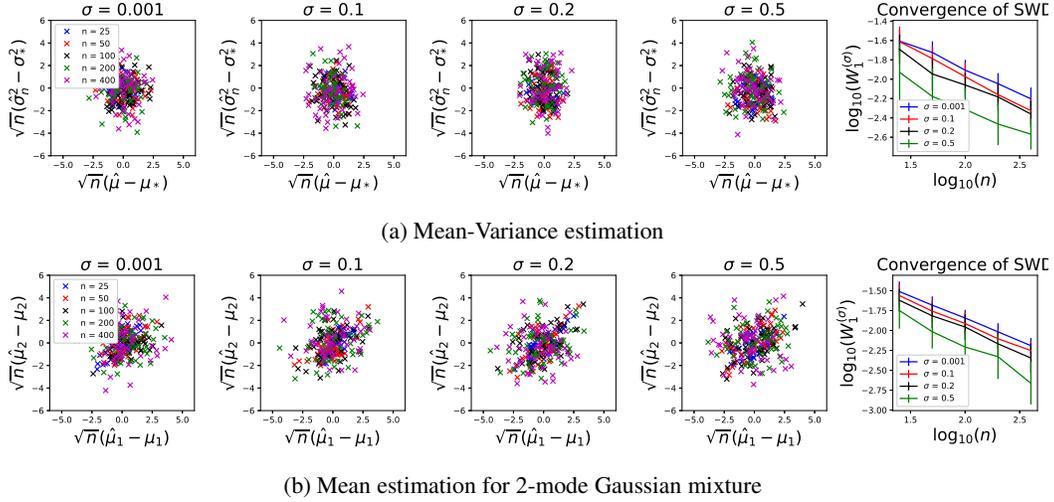}
		\caption{Mean-Variance estimation}
	\end{subfigure}\\
	\centering
	\begin{subfigure}[b]{1.0\linewidth}
		\includegraphics[width=\textwidth]{D1_gaussMeanMean.eps}
		\caption{Mean estimation for 2-mode Gaussian mixture}
	\end{subfigure}
    \caption{One-dimensional limiting distributions for: (a) the mean and variance of an MSWE-based generative model fitted to $P=\cN(\mu_\star,\sigma_\star^2)$, with $\mu_\star=0$ and $\sigma_\star=1$; and (b) the two mean parameters of the mixture $P=0.5\cN(\mu_1,1)+0.5\cN(\mu_2,1)$, for $\mu_1=0$ and $\mu_2=1$. Also shown on a log-log scale (with error bars) is the SWD convergence as a function of $n$.}
    \label{fig:OneDimSupp}
\end{figure}

\bibliographystyle{unsrt}
\bibliography{lecture,ref}

\newpage

\end{document}